\documentclass{amsart}
\usepackage{graphicx} 
\usepackage{amsmath}
\DeclareMathOperator{\arccosh}{arccosh}
\usepackage{amssymb}
\usepackage{bbm}
\usepackage{geometry}
\usepackage{enumitem}
\usepackage[english]{babel}
\usepackage{amsthm}
\usepackage{hyperref}
\usepackage{subcaption}
\usepackage{caption}
\usepackage{bm}
\usepackage{xcolor}
\usepackage{tikz}

\usepackage [english]{babel}
\usepackage [autostyle, english = american]{csquotes}
\MakeOuterQuote{"}

\title{Dynamics of Integer Zeroes of Homogeneous Quadratic Equations over $\mathbb{R}^3$}
\author{Alden Paige}
\date{July 2026}

\begin{document}
\address{A. Paige, Department of Mathematics, University of Manchester, M13 9PL}
\email{alden.paige@manchester.ac.uk}

\theoremstyle{plain}
\newtheorem{thm}{Theorem}
\newtheorem{lma}[thm]{Lemma}
\newtheorem{prop}[thm]{Proposition}
\newtheorem{cor}[thm]{Corollary}
\newtheorem{conj}[thm]{Conjecture}
\numberwithin{thm}{section}

\theoremstyle{definition}
\newtheorem{defn}[thm]{Definition}
\newtheorem{rem}[thm]{Remark}
\newtheorem{exmp}[thm]{Example}
\maketitle

\begin{abstract}
    Romik has presented a construction of a 1-dimensional dynamical system on the unit interval by developing an algorithm that returns the unique sequence of matrices associated with a positive primitive Pythagorean triple (in the sense of Barning), and projecting the map involved in this algorithm onto an appropriate 1-dimensional space via stereographic projection. Romik additionally computes the infinite, absolutely continuous invariant measure, and shows that the system is conservative and ergodic.

    Later, Cha et al.\ provided a method of calculating "Berggren trees", which are generalisations of the tree of positive primitive Pythagorean triples one may construct via Barning's theorem, except for different homogeneous quadratic equations in 3 variables. We present here a method of computing 1-dimensional dynamical systems induced from these Berggren trees following Romik's outline, and determine their absolutely continuous invariant measures by adapting the method of Keane.
\end{abstract}

\section{Introduction}
There are several ways in which Pythagorean triples can be described. One such way is the following.
\begin{prop}[Barning, \cite{pub:7151}]
Every positive primitive Pythagorean triple $(a,b,c)$ (i.e. Pythagorean triples that satisfy $a,b,c>0$ and $ \gcd (a,b)=1$) can be uniquely expressed in the form 
\[ \begin{bmatrix}
    a\\b\\c
\end{bmatrix} = M_{a_1} \cdot \ldots \cdot M_{a_n}\cdot  \bm{v},\]
where $\bm{v} \in \{ (3,4,5)^T, (4,3,5)^T\}$, and each $M_{a_i}$ is one of three matrices described below:
\begin{equation}\label{PythagMatrices--}
    M_1 = \begin{bmatrix}
        -1 & 2 & 2 \\
        -2 & 1 & 2 \\
        -2 & 2 & 3\\
    \end{bmatrix}, \quad 
    M_2 = \begin{bmatrix}
        1 & 2 & 2 \\
        2 & 1 & 2 \\
        2 & 2 & 3 \\
    \end{bmatrix}, \quad 
    M_3 = \begin{bmatrix}
        1 & -2 & 2 \\
        2 & -1 & 2 \\
        2 & -2 & 3\\
    \end{bmatrix}.
\end{equation}
\end{prop}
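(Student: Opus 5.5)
We prove this by descent on the hypotenuse $c$, using the inverse matrices. Write $Q = \mathrm{diag}(1,1,-1)$ and $q(\bm{x}) = x_1^2+x_2^2-x_3^2$. First, by direct computation, each $M_i$ satisfies $M_i^T Q M_i = Q$, so each $M_i$ and each $M_i^{-1} = Q M_i^T Q$ preserves $q$. Each $M_i$ has integer entries and determinant $\pm 1$, so $M_i^{-1}$ is also integral. Hence both $M_i$ and $M_i^{-1}$ map integer zeros of $q$ to integer zeros of $q$, and they preserve primitivity, i.e.\ $\gcd(a,b)=1$, which for zeros of $q$ is equivalent to $\gcd(a,b,c)=1$. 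A direct check shows each $M_i$ maps positive triples to positive triples. For $M_2$ this is clear. For $M_1$, the first coordinate of $M_1(a,b,c)^T$ is $-a+2b+2c$, which is positive since $c>a$; the other coordinates are handled the same way, and $M_3$ is symmetric to $M_1$ under swapping $a$ and $b$. So every product $M_{a_1}\cdots M_{a_n}\bm v$ is a positive primitive triple, which gives existence of the form in one direction.

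For the converse, take a positive primitive triple $(a,b,c)$ other than $(3,4,5)$ and $(4,3,5)$, and compute
\[ M_i^{-1}(a,b,c)^T = (\pm a \pm 2b \ldots). \]
The common third coordinate is $c' = 3c - 2|a|\text{-type terms}$. Concretely, the three preimages have third coordinate $3c-2(\epsilon_1 a+\epsilon_2 b)$, with signs $(\epsilon_1,\epsilon_2) \in \{(-1,1),(1,1),(1,-1)\}$. The key claims are the following.
\begin{itemize}
\item[(i)] For $c>5$, exactly one $M_i^{-1}$ sends $(a,b,c)$ to a triple with both legs positive.
\item[(ii)] That triple satisfies $0<c'<c$.
\end{itemize}
For (ii), one uses $c'=3c-2(a+b)$ for $i=2$ together with $a+b>c$, which follows from $(a+b)^2 > a^2+b^2$. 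Positivity of $c'$ follows from $(a+b)^2 \le 2c^2 < \tfrac94 c^2$. For (i), the legs of $M_2^{-1}(a,b,c)$ are $a-2b\ldots$; more precisely they are $(a+2b-2c,\,2a+b-2c)$ up to the sign pattern. Which of these are positive is governed by comparing $a,b,c$ through inequalities such as $2a+b>2c$. I expect a case analysis by the ratio $a/b$ to show that exactly one sign pattern yields positive legs. The cleanest route is to parametrize $a=m^2-n^2$, $b=2mn$, $c=m^2+n^2$ (or with $a,b$ swapped), where $m>n>0$ are coprime of opposite parity. The three inverse maps then become $(m,n)\mapsto(n,m-n)$, $(m-2n,n)$ and $(2n-m,n)$, each reordered appropriately. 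Exactly one of these produces a valid pair $m'>n'>0$ unless $(m,n)=(2,1)$, which corresponds to the roots $(3,4,5)$ and $(4,3,5)$.

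Iterating the descent, $c$ strictly decreases in positive integers, so the process terminates. By (i) it can only stop at $c=5$, whose positive primitive triples are exactly the two roots. This gives existence of the expansion. Uniqueness also follows from (i): if $(a,b,c) = M_{a_1} \bm w$ with $\bm w$ positive, then $M_{a_1}$ is forced to be the unique inverse giving a positive triple. Inducting on $c$ then forces the whole word and the root $\bm v$; the roots themselves are not images of positive triples, since their descendants would have $c'<5$. The main obstacle is step (i), the exactly-one-branch claim. I would handle it via the $(m,n)$ Euclid-type parametrization, where it reduces to the trichotomy $m>2n$, $m<2n$ (with $m=2n$ excluded by coprimality and parity) and the degenerate case $n'=0$.
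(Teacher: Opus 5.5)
The paper gives no proof of this proposition; it is quoted from Barning, so your argument has to stand on its own. Your strategy is the standard one and is sound: descent on $c$ through the inverse matrices, with forward positivity plus an ``exactly one inverse yields positive legs'' lemma giving both existence and uniqueness. The gap is that the lemma you correctly flag as the crux, (i), is never established, and the computations you offer for it and for (ii) are wrong.

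The preimages do not have third coordinates $3c-2(\epsilon_1a+\epsilon_2b)$ with varying signs. Since $M_1=M_2\,\mathrm{diag}(-1,1,1)$ and $M_3=M_2\,\mathrm{diag}(1,-1,1)$, you get $M_1^{-1}(a,b,c)^T=(-x,y,c')^T$, $M_2^{-1}(a,b,c)^T=(x,y,c')^T$ and $M_3^{-1}(a,b,c)^T=(x,-y,c')^T$. Here $x=a+2b-2c$, $y=2a+b-2c$, and all three share the \emph{same} $c'=3c-2a-2b$. As written, (ii) is checked only for $i=2$. With your formula the other two branches would have $c'=3c\pm2(a-b)>c$ (as $|a-b|<c$), so the descent would actually fail there.

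Your $(m,n)$ sketch has the same problem. The inverse of $M_2$ is $(m,n)\mapsto(n,m-2n)$, not $(n,m-n)$. For example, $(21,20,29)$ with $(m,n)=(5,2)$ descends to $(3,4,5)$, not to $(5,12,13)$. The correct branch split is $m>3n$ for $M_1^{-1}$, $2n<m<3n$ for $M_2^{-1}$, and $n<m<2n$ for $M_3^{-1}$. It is $m=3n$ that parity excludes, while $m=2n$ is exactly the root $(2,1)$. A split at $m=2n$ alone cannot tell $M_1^{-1}$ from $M_2^{-1}$, so it does not give ``exactly one''.

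The repair is short once you use the sign structure above. Since $a+2b>0$ and $2c>0$, $x$ has the sign of $(a+2b)^2-4c^2=a(4b-3a)$; likewise $y$ has the sign of $b(4a-3b)$. They cannot both be negative, since that would give $16b<12a<9b$. By primitivity, $x=0$ or $y=0$ forces $(a,b,c)=(4,3,5)$ or $(3,4,5)$. Hence for $c>5$ exactly one of $(x,y)$, $(-x,y)$, $(x,-y)$ has both entries positive, which is (i). Your bound $c<a+b<\tfrac32c$ then gives $0<c'<c$ for every branch at once. With this in place, the rest of your existence and uniqueness argument goes through as you describe, including the check that the roots are not images of positive triples.
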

This result appears to have been determined by many people independently, though we choose to cite Barning \cite{pub:7151} since much of the literature also does. Romik \cite{Romik2004} uses this theorem as a unique encoding of the positive primitive Pythagorean triples by finite strings of three digits (with the additional information of which triple $\bm{v}$ is), and describes a dynamical system which serves as an algorithm by which one can determine the encoding of any given triple. In order to do this, Romik takes advantage of the bijection that exists between primitive Pythagorean triples and rational points of $S^1$, and defines the map on these rational points, which can be continuously extended to all points in the positive quadrant. By presenting an infinite measure for which the dynamical system is ergodic and conservative, Romik is able to use ergodic theory to determine some results about the behaviour of Pythagorean triples. 

In \cite{Cha2018}, Cha et al.\ investigate other quadratic forms on $\mathbb{R}^3$, and present a method for determining what matrices and "base triples" are required to completely describe all integer solutions to this quadratic form in some specified region  $S \subset \mathbb{R}^3$. They call these matrix-and-base-triple collections "Berggren trees" for the specified quadratic form and region $S$, which allows for a similar encoding of the integer solutions in $S$. One can ask whether, following the outline of Romik, we can develop an algorithm to determine the sequence associated with a given integer solution, and additionally whether the resulting map has an infinite conservative ergodic measure.

In \cite{Keane95}, Keane proposes a method by which Gauss may have deduced the invariant measure for the continued fraction map. This method involves considering any point $x \in [0,1)$ as a root of a quadratic polynomial, and defining a new map based on how the coefficients of the polynomial change when we apply the continued fraction map to the root $x$ of the polynomial. This is explained in more detail in Section \ref{fullbranchcase}. This idea has been utilised in some spirit in various articles in order to construct natural extensions, often referred to as planar extensions in these papers, for piecewise M\"{o}bius maps (see \cite{arnoux17,calta24,Nakada81,PANTI202250} for some examples). It is worth noting, however, that these articles explicitly do not allow for the cases where we have non-uniformly expanding piecewise M\"{o}bius transformations (as this causes the extensions to have non-finite measure), and also do not mention the application to analysing solutions to quadratic forms.

In this article, we present some examples of quadratic forms with their associated Berggren trees and matrices, and compute the associated dynamical system a la Romik. Additionally, we present a more immediate adaptation of Keane's method \cite{Keane95} to calculate their absolutely continuous infinite invariant measures, and subsequently prove that these systems are ergodic and conservative using standard techniques involving bounded distortion properties. The article is intended to motivate further study into analysing the solutions to quadratic forms on $\mathbb{R}^3$ via Romik's method, and additionally to give an alternative description of the adapted Keane method which highlights that the method is still applicable to find infinite invariant measures for non-uniformly contracting piecewise M\"{o}bius maps.

\subsection{Main Computed Examples}\label{Main Results}

This section presents (without proof) quadratic forms for which the corresponding dynamical systems have been constructed and the invariant measure has been calculated. To reiterate, for each quadratic form on $\mathbb{R}^3$ in question, we have 
\begin{itemize}[itemsep=-10pt]
    \item A matrix generating system for the solutions in some subset of $\mathbb{Z}^3$, \\
    \item An associated 1-dimensional map,\\
    \item An absolutely continuous, $\sigma$-finite, invariant, conservative, ergodic measure.
\end{itemize}
For details about how these are related, see Sections \ref{MatricesSection}, \ref{MapsSection}, and \ref{ExtensionSection} respectively . The results are now stated, the first included being Romik's map associated with Pythagorean triples for completeness and comparison:

\begin{thm}
    For the quadratic form $Q(\underline{x}) = x_1^2 + x_2^2 - x_3^2$, the matrix generating system for solutions in $\{ \underline{x}\in \mathbb{Z}^3 : x_1, x_2, x_3 > 0 \}$ consists of the 3 matrices in \eqref{PythagMatrices--} and base triples $(3,4,5)^T,$ $ (4,3,5)^T$. The corresponding 1-dimensional dynamical system up to conjugacy is 
    \begin{equation}
        T_1(t) = \begin{cases}
            \frac{t}{1-2t} & t \in \left(0, \frac{1}{3} \right), \\
            \frac{1-2t}{t} & t \in \left(\frac{1}{3}, \frac{1}{2} \right), \\
            \frac{2t-1}{t} & t \in \left( \frac{1}{2}, 1\right),\\
        \end{cases}
    \end{equation}
    which possesses the $\sigma$-finite, invariant, ergodic, conservative measure 
    \begin{equation}
        d\mu = \frac{1}{t(1-t)}\, dt.
    \end{equation}
\end{thm}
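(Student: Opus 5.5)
The statement has three parts: the matrix generating system, the derivation of $T_1$, and the properties of $\mu$. I would take them in that order.

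\emph{Matrix system.} This part is exactly Barning's proposition quoted above, so nothing new is needed. I would only check that each $M_i$ preserves $Q$ (that is, $M_i^T J M_i = J$ with $J=\mathrm{diag}(1,1,-1)$) and maps the positive cone region into itself.

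\emph{The map $T_1$.} I would follow Romik and send a primitive triple $(a,b,c)$ to the rational point $(a/c,b/c)$ on the positive quarter of $S^1$. I would then apply the stereographic-type coordinate $t=t(a/c,b/c)\in(0,1)$; up to conjugacy this is the coordinate in which the $M_i$ act as Möbius transformations.

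The descent algorithm applies the unique $M_i^{-1}$ that keeps the triple positive. In the $t$-coordinate the three regions where $M_1^{-1},M_2^{-1},M_3^{-1}$ keep positivity become the intervals $(0,\tfrac13)$, $(\tfrac13,\tfrac12)$ and $(\tfrac12,1)$. I would verify this by computing each $M_i^{-1}$ as a $2\times 2$ projective action (via the spin homomorphism $\mathrm{SL}_2\to\mathrm{SO}(2,1)$) and reading off the three branches in the statement. Continuity of these branches then extends the map from rational points to all of $(0,1)$.

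\emph{Invariance of $\mu$.} The quickest route is a direct Perron--Frobenius check; Keane's method only motivates the guess $h(t)=1/(t(1-t))$. The inverse branches are
\[
t=\frac{y}{1+2y},\qquad t=\frac{1}{y+2},\qquad t=\frac{1}{2-y}, \qquad y\in(0,1).
\]
Each branch is a full branch onto $(0,1)$. Computing $h(t)\,|dt/dy|$ branch by branch gives $\frac{1}{y(1+y)}$, $\frac{1}{1+y}$ and $\frac{1}{1-y}$ respectively. These sum to
\[
\frac1y+\frac1{1-y}=\frac{1}{y(1-y)}=h(y),
\]
so $\mu$ is invariant. It is $\sigma$-finite because $h$ is integrable on every $[\varepsilon,1-\varepsilon]$, and infinite because of the non-integrable singularities at $0$ and $1$.

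\emph{Conservativity and ergodicity.} These are the main obstacle, because $T_1$ has indifferent fixed points: $T_1(0)=0$ with $T_1'(0)=1$, and $T_1(1)=1$ with $T_1'(1)=1$. I would induce on $Y=[\tfrac13,\tfrac12]$, or on a slightly larger compact interval bounded away from $0$ and $1$.

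First, Lebesgue-a.e. $t$ enters $Y$ infinitely often. On $(0,\tfrac13)$ the map is conjugate to $u\mapsto u-2$ via $u=1/t$, so every orbit there leaves in finitely many steps. The same argument applies near $1$.

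Next, the first-return map $T_Y$ is a countable full-branch Möbius map. Its branches are compositions of a bounded number of types of branches with powers of the parabolic branches, so I expect it to be uniformly expanding after a bounded iterate. Because the branches are Möbius, it has bounded distortion (Rényi's condition holds since $|T''|/|T'|^2$ is bounded for such compositions on $Y$). Standard results then give that $T_Y$ is exact, hence ergodic, with respect to a finite equivalent measure, which must be $\mu|_Y$.

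Finally, since $\mu(Y)<\infty$ and almost every orbit returns to $Y$, the standard lifting lemma (ergodic induced map plus a sweep-out set) yields that $T_1$ is conservative and ergodic with respect to $\mu$. The step I would check most carefully is the uniform expansion and distortion bound for $T_Y$ near the endpoints of the parabolic branches. If that check becomes messy, I would instead cite Romik's original proof for this case.
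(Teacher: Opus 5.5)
Your treatment of the matrix system and of $T_1$ is fine. In the coordinate $t=y/(1+x)$, i.e.\ $t=n/m$ in Euclid's parametrisation, the $M_i$ act by $t\mapsto t/(1+2t)$, $1/(2+t)$ and $1/(2-t)$. Your transfer-operator check of $h(t)=1/(t(1-t))$ is also correct. It is a legitimate shortcut compared with the paper's Keane-type derivation: there one takes $R=(-\infty,0)\cup(1,\infty)$, checks $R=\bigsqcup_i\phi_i(R)$, and obtains $\int_R(\eta-t)^{-2}\,d\eta=\tfrac1t+\tfrac1{1-t}$.

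The gap is in the sweep-out step for your primary inducing set $Y=[\tfrac13,\tfrac12]=[2]$. Knowing that an orbit cannot stay forever in $[1]=(0,\tfrac13)$, or forever in $[3]=(\tfrac12,1)$, does not imply that it ever enters $[2]$, because an orbit can shuttle between $[1]$ and $[3]$ indefinitely. For example, $T_1(2-\sqrt3)=1/\sqrt3$ and $T_1(1/\sqrt3)=2-\sqrt3$. More generally, a whole Cantor set of points, coded by sequences in $\{1,3\}^{\mathbb N}$, never meets $Y$. That this set is Lebesgue-null is true, but it is a consequence of the ergodicity you are trying to prove, not of your argument. For the same reason your description of $T_Y$ is inaccurate. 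The return branches to $[2]$ are indexed by arbitrary words in $\{1,3\}^*$, with unboundedly many alternations, not by a bounded number of types composed with powers of the parabolic branches.

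The missing idea is the one in the paper's rule of thumb: add the length-two transition cylinders. Write $[ij]=\{t\in[i]:T_1t\in[j]\}$ as in the paper, and take $J=[2]\cup[13]\cup[31]$. Here $[13]=(\tfrac14,\tfrac13)$ and $[31]=(\tfrac12,\tfrac35)$, so $J=(\tfrac14,\tfrac35)$ and $\mu(J)<\infty$. Your conjugacies $u=1/t\mapsto u-2$ and $v=1/(1-t)\mapsto v-1$ give the following:
\begin{itemize}
\item every maximal run in $[1]$ is finite and ends in $[12]\cup[13]$;
\item every maximal run in $[3]$ is finite and ends in $[31]\cup[32]$;
\item $[12]$ and $[32]$ map into $[2]$.
\end{itemize}
Hence every point off a countable set enters $J$. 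This gives conservativity via Maharam's theorem, and the sweep-out property needed to lift ergodicity.

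The first-return map to $J$ then genuinely has branches $\phi_1^k\circ\phi_j$ and $\phi_3^k\circ\phi_j$, each mapping onto one of $[2],[13],[31]$. So it is Markov rather than full-branched onto $J$, and the expansion and distortion estimates go through as in the paper's treatment of $T_4$. Your hedge about a \emph{slightly larger compact interval} points the right way. However, the proof only closes once you require the interval to contain $[13]\cup[31]$ and give this argument.
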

See \cite{Romik2004} for further results pertaining to this system. The following is the system relating to $120$-degree triangles, mentioned at the end of \cite{Romik2004} and in \cite{Cha2018}:

\begin{thm}
    For the quadratic form $Q(\underline{x}) = x_1^2 + x_1x_2 + x_2^2 - x_3^2$, the matrix generating system for the solutions in $\{ \underline{x} \in \mathbb{Z}^3 : x_1, x_2, x_3 > 0 \}$ consists of the five matrices 
    \begin{equation}
        \begin{gathered}
            N_1 = \begin{bmatrix}
                -3 & 1 & 4 \\
                -4 & -1 & 4 \\
                -6 & 0 & 7 \\
            \end{bmatrix}, \quad 
            N_2 = \begin{bmatrix}
                4 & 1 & 4 \\
                3 & -1 & 4 \\
                6 & 0 & 7 \\
            \end{bmatrix}, \quad 
            N_3 = \begin{bmatrix}
                4 & 3 & 4\\
                3 & 4 & 4 \\
                6 & 6 & 7 \\
            \end{bmatrix}, \\
            N_4 = \begin{bmatrix}
                -1 & 3 & 4 \\
                1 & 4 & 4 \\
                0 & 6 & 7 \\
            \end{bmatrix}, \quad 
            N_5 = \begin{bmatrix}
                -1 & -4 & 4  \\
                1 & -3 & 4 \\
                0 & -6 & 7
            \end{bmatrix},
        \end{gathered}
    \end{equation}
    along with the base triples $(5,3,7)^T,$ $(3,5,7)^T$, $(7,8,13)^T$, $(8,7,13)^T$. The corresponding 1-dimensional dynamical system (up to conjugacy) is 
    \begin{equation}
        T_2(t) = \begin{cases}
            \frac{t}{1-3t} & t \in \left(0, \frac{1}{4} \right), \\
            \frac{1-3t}{t} & t \in \left(\frac{1}{4}, \frac{1}{3} \right), \\
            \frac{3t-1}{1-2t} & t \in \left( \frac{1}{3}, \frac{2}{5} \right), \\
            \frac{1-2t}{3t-1} & t \in \left(\frac{2}{5}, \frac{1}{2} \right), \\
            \frac{2t-1}{t} & t \in \left(\frac{1}{2}, 1 \right),
        \end{cases}
    \end{equation}
    (with projection map $\pi(x,y) = \frac{y}{x+1}$, $\pi^{-1}(t) = \left( \frac{1-t^2}{1+t+t^2}, \frac{t^2 + 2t}{1+t+t^2}\right)$) and possesses the $\sigma$-finite, invariant, ergodic and conservative measure
    \begin{equation} d\mu = \frac{1}{t(1-t)}\, dt.\end{equation}
\end{thm}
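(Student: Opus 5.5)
The plan has four parts: the algebraic claim about the matrices, the derivation of $T_2$ by projection, the invariance of $\mu$ via a Keane-type telescoping identity, and ergodicity and conservativity by inducing away from the two parabolic fixed points.

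\textbf{Matrices.} Let $A$ be the symmetric matrix of $Q(\underline{x}) = x_1^2+x_1x_2+x_2^2-x_3^2$. For each $i$ I would check that $N_i^T A N_i = A$, so each $N_i$ lies in the integral orthogonal group $O_Q(\mathbb{Z})$, and I would check that $N_i$ maps the positive part of the cone $\{Q=0\}$ into itself. Completeness and uniqueness of the tree then follow the descent argument of Cha et al.\ \cite{Cha2018}. Given a positive integer solution $\underline{x}$ that is not a base triple, exactly one $N_i^{-1}\underline{x}$ is again positive, and it has strictly smaller third coordinate. The regions where each $N_i^{-1}$ keeps positivity tile the positive cone. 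Since $x_3$ decreases, iterating $N_i^{-1}$ terminates, and it can only terminate at one of the four base triples. I would verify the tiling claim after projecting, where it becomes the statement that the intervals in the definition of $T_2$ partition $(0,1)$.

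\textbf{The map $T_2$.} Dehomogenise by $x_3=1$ to get the arc $x^2+xy+y^2=1$ with $x,y>0$. Projecting from $(-1,0)$ gives $t=\pi(x,y)=y/(x+1)$, with the stated inverse $\pi^{-1}$. The arc corresponds to $t\in(0,1)$: the point $(1,0)$ goes to $t=0$ and $(0,1)$ goes to $t=1$. Each $N_i$ acts projectively on the conic, and since it preserves the conic it is conjugated by $\pi$ to a M\"obius map $g_i$ of $t$. I would compute $g_i$ by tracking three points, for instance $t=0,1,\infty$. The algorithm's map is $T_2 = g_i^{-1}$ on $g_i((0,1))$. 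Computing the branch endpoints gives images $(0,1)$, $(1,0)$, $(0,1)$, $(1,0)$, $(0,1)$, so $T_2$ has five full branches. The branch endpoints $0,\tfrac14,\tfrac13,\tfrac25,\tfrac12,1$ reproduce the partition. This settles the tiling needed above, since the five image intervals $g_i((0,1))$ are disjoint and cover $(0,1)$ up to endpoints.

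\textbf{Invariance.} Write $h(t)=\frac{1}{t(1-t)}=\frac{d}{dt}\log\frac{t}{1-t}$. Following Keane's idea, invariance reduces to the transfer-operator identity
\[ \sum_{i=1}^{5} h(g_i(x))\,|g_i'(x)| = h(x), \qquad x\in(0,1). \]
For a M\"obius branch, $h(g_i(x))g_i'(x)=\frac{d}{dx}\log\frac{g_i(x)}{1-g_i(x)}$. So the left side is the derivative of
\[ \sum_i \pm\log\frac{g_i(x)}{1-g_i(x)}, \]
where the sign is $+$ for increasing branches and $-$ for decreasing ones. This is the logarithm of a product of linear fractions. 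I expect the product to telescope, because consecutive branches share endpoints, leaving $\frac{x}{1-x}$ times a constant. Checking that is a finite rational-function identity.

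\textbf{Ergodicity and conservativity.} $T_2$ has neutral fixed points at $0$ (where $T_2'(0)=1$) and at $1$ (where the last branch has derivative $1/t^2=1$). It is uniformly expanding away from these two points. I would induce on $Y=[\tfrac14,\tfrac12]$. The first-return map is a countable full-branch Markov map whose branches are compositions of M\"obius maps, so it satisfies Adler/R\'enyi bounded distortion, since $|T''|/|T'|^2$ is bounded for M\"obius compositions with uniform expansion. It is therefore exact with an absolutely continuous invariant probability measure, which must be $\mu|_Y$ normalised.

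Two further facts are needed. First, $\bigcup_n T_2^{-n}Y$ has full measure, because the orbit of every $x\in(0,1)$ leaves each neighbourhood of the repelling parabolic points. Second, $\mu(Y)<\infty$. These together give conservativity and ergodicity of $(T_2,\mu)$ by the standard lifting from induced systems. Since $\int_0 \frac{dt}{t}=\infty$, $\mu$ is infinite but $\sigma$-finite.

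\textbf{Main obstacle.} I expect the hardest step to be establishing the distortion bounds uniformly for the induced map near the two parabolic points. For this I would write the $n$-th inverse iterates near $0$ and $1$ explicitly, since $t\mapsto t/(1-3t)$ iterates to $t/(1-3nt)$. The second potential sticking point is the positivity and descent verification for the tree, which the projection step reduces to endpoint checks.
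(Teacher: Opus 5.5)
Your tree and projection steps are sound. Your invariance argument is a legitimate alternative to the paper's route: the paper derives the density from Keane's extension as $\int_R(\eta-t)^{-2}\,d\eta$ with $R=(-\infty,0)\cup(1,\infty)$, whereas you only verify the density you were given, which suffices here. The telescoping does close up. With inverse branches $g_1=\frac{x}{1+3x}$, $g_2=\frac{1}{x+3}$, $g_3=\frac{1+x}{3+2x}$, $g_4=\frac{1+x}{2+3x}$, $g_5=\frac{1}{2-x}$, the signed product of the $\frac{g_i}{1-g_i}$ is $\frac{x}{1+2x}\cdot(x+2)\cdot\frac{1+x}{2+x}\cdot\frac{1+2x}{1+x}\cdot\frac{1}{1-x}=\frac{x}{1-x}$.

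\textbf{The gap is in the conservativity and ergodicity step.} You justify that $\bigcup_n T_2^{-n}Y$ has full measure, for $Y=[\frac14,\frac12]=[2]\cup[3]\cup[4]$, by saying that every orbit leaves each neighbourhood of the two parabolic points. That does not force the orbit into $Y$. An orbit can pass from $(0,\frac14)$ directly into $(\frac12,1)$ via the cylinder $[15]=(\frac15,\frac14)$, and back via $[51]=(\frac12,\frac47)$, indefinitely. For instance $\phi_5\circ\phi_1(t)=5-\frac1t$, so $\{\frac{5-\sqrt{21}}{2},\frac{1+\sqrt{21}}{10}\}$ is a $2$-cycle that never meets $Y$. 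More generally, every coding in $\{1,5\}^{\mathbb N}$ with infinitely many switches gives such a point. So the non-returning set $K$ is uncountable, and you must prove $\lambda(K)=0$. Until you do, Maharam's theorem is unavailable. So is your assertion that the first-return map is defined a.e.\ on $Y$, and hence preserves $\mu|_Y$.

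The same phenomenon undermines your distortion plan. Return branches to $Y$ are $\phi_w\circ\phi_j$ with $w$ an arbitrary word in $\{1,5\}$, so the single-excursion formula $t/(1-3nt)$ does not cover them. Also, uniform expansion of M\"{o}bius compositions does not by itself bound $|T''|/(T')^2$.

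\textbf{How to repair it.} The paper's rule of thumb fixes both problems: add the length-two cylinders where the orbit switches sides, and induce on $J=[2]\cup[3]\cup[4]\cup[15]\cup[51]=(\frac15,\frac47)$.
\begin{itemize}
\item An orbit that avoids $J$ forever has eventually constant coding $1^\infty$ or $5^\infty$. No point of $(0,1)$ off the countable set of preimages of $0$ and $1$ has such a coding. So $J$ is a sweep-out set of finite $\mu$-measure, and Maharam gives conservativity.
\item The return branches become $\phi_1^k\circ\phi_j$ or $\phi_5^k\circ\phi_j$, with $\phi_5^k(t)=\frac{(k+1)t-k}{kt-k+1}$. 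They are Markov for $\{[2],[3],[4],[15],[51]\}$ with the finitely many images $J$, $Y$, $[15]$, $[51]$, rather than full.
\item For distortion, the right reason is where the poles lie. Each $\phi_i$ maps $\hat R=R\cup\{\infty\}$ into itself, so every inverse branch $g$ of $T_2^n$ is M\"{o}bius with pole $p\in\hat R$. Hence $|T_J''|/(T_J')^2=|g''/g'|(y)=2/|y-p|\le 10$ at image points $y\in J$.
\end{itemize}
Alternatively, you can keep $Y$. Apply the same Koebe-type bound on $[\frac16,\frac35]$ at the switching times of a density point of $K$ to show $\lambda(K)=0$.
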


The following two examples are those for which the explicit conjugation to a piecewise M\"{o}bius interval map is novel, where the associated matrix systems are explicitly computed in \cite{Cha2018}.

\begin{thm}
    For the quadratic form $Q(\underline{x}) = x_1^2 + x_2^2 - 2x_3^2$, the matrix generating system for the solutions in $\{x_1, x_2, x_3 \in \mathbb{Z}: x_2 > x_1 > 0, \ x_3>0 \}$ consists of the 3 matrices
    \begin{equation}
        N_1 = \begin{bmatrix}
            0 & 1 & 0 \\
            -3 & 0 & 4 \\
            -2 & 0 & 3 \\
        \end{bmatrix}, \quad 
        N_2 = \begin{bmatrix}
            0 & 1 & 0 \\
            3 & 0 & 4 \\
            2 & 0 & 3 \\
        \end{bmatrix}, \quad 
        N_3 = \begin{bmatrix}
            1 & 0 & 0 \\
            0 & 3 & 4 \\
            0 & 2 & 3 \\
        \end{bmatrix},
    \end{equation}
    along with the base triple $(1,7,5)^T$. The corresponding 1-dimensional dynamical system is given by 
    \begin{equation}
        T_3(t) = \begin{cases}
            \frac{t}{1-t} & t \in \left(0, \frac{\sqrt{2}}{1+ \sqrt{2}} \right),\\
            \frac{2-2t}{t} & t \in \left(\frac{\sqrt{2}}{1+ \sqrt{2}}, 1\right),\\
            \frac{2t-2}{2-t} & t \in (1, \sqrt{2} ),\\
        \end{cases}
    \end{equation}
    with the conjugacy map being $\pi(x,y) = \frac{y-x}{x+1}$, and possesses the $\sigma$-finite, invariant, ergodic and conservative measure
    \begin{equation}
        d\mu = \frac{1}{t} dt.
    \end{equation}
\end{thm}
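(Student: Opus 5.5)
We verify the three claims of the statement in turn: the matrix generating system, the projected map $T_3$, and the properties of $d\mu = dt/t$.

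\textbf{Matrix system.} The matrices $N_1,N_2,N_3$ and the base triple $(1,7,5)^T$ are taken from \cite{Cha2018}. We only check that each $N_i$ preserves $Q(\underline{x}) = x_1^2+x_2^2-2x_3^2$, i.e.\ $N_i^T A N_i = A$ with $A=\mathrm{diag}(1,1,-2)$, and that $Q(1,7,5)=0$. We also record the \emph{inverse} matrices $N_i^{-1}=A^{-1}N_i^TA$. On the region $S=\{x_2>x_1>0,\ x_3>0\}$, apply exactly one $N_i^{-1}$, chosen by which subregion the triple lies in. This strictly decreases $x_3$ until the base triple is reached, which is the descent argument underlying the Berggren tree.

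\textbf{Projected map.} Dehomogenise by $(x,y)=(x_1/x_3,x_2/x_3)$, so solutions lie on the ellipse $x^2+y^2=2$. Use the projection $\pi(x,y)=\frac{y-x}{x+1}$, which is stereographic projection from $(-1,-1)$ followed by a shift. Compute $\pi^{-1}$ by intersecting the line through $(-1,-1)$ of slope determined by $t$ with the ellipse. The arc of the ellipse corresponding to $S$ (with $y>x>0$) should map onto $(0,\sqrt2)$. Each $N_i^{-1}$ acts projectively on the ellipse, hence linear-fractionally in $t$: the conic is rational and $O(Q)$ acts on it through $PGL_2$. To get each branch of $T_3$, compute $\pi\circ N_i^{-1}\circ\pi^{-1}$ as a M\"obius map. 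It suffices to check it on three points, e.g.\ the images of the endpoints and one interior point.

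The subregions where each $N_i^{-1}$ is applied pull back to the intervals $(0,\frac{\sqrt2}{1+\sqrt2})$, $(\frac{\sqrt2}{1+\sqrt2},1)$ and $(1,\sqrt2)$. The breakpoints correspond to fixed lines of the subdivision of $S$, for instance the boundary ray $x_1=0$ mapped by the $N_i$. We then confirm that each branch maps its interval onto $(0,\sqrt2)$:
\begin{itemize}
\item $t/(1-t)$ sends $\frac{\sqrt2}{1+\sqrt2}\mapsto\sqrt2$;
\item $(2-2t)/t$ sends $1\mapsto0$ and $\frac{\sqrt2}{1+\sqrt2}\mapsto\sqrt2$;
\item $(2t-2)/(2-t)$ sends $1\mapsto0$ and $\sqrt2\mapsto\sqrt2$.
\end{itemize}
So $T_3$ is full-branched.

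\textbf{Invariance.} Verify the transfer operator identity
\[
\sum_{i}\frac{|(\phi_i)'(s)|}{\phi_i(s)}=\frac1s \quad (s\in(0,\sqrt2)),
\]
where $\phi_1(s)=\frac{s}{1+s}$, $\phi_2(s)=\frac{2}{s+2}$ and $\phi_3(s)=\frac{2s+2}{s+2}$ are the inverse branches. The terms are
\[
\frac{1}{s(1+s)},\qquad \frac{1}{s+2},\qquad \frac{1}{(s+2)(s+1)}.
\]
Summing gives $\frac{(s+2)+s(1+s)+s}{s(s+1)(s+2)}=\frac{(s+1)(s+2)}{s(s+1)(s+2)}=\frac1s$. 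To present this in the spirit of Keane's method, one finds the density by the planar-extension ansatz, but the direct check suffices as proof.

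\textbf{Conservativity and ergodicity.} The only indifferent fixed point is $0$, from the parabolic branch $t/(1-t)$ with derivative $1$. The density $1/t$ is non-integrable there, so $\mu$ is infinite, and it is $\sigma$-finite since it is finite on $[\epsilon,\sqrt2)$. Induce on $Y=[\frac{\sqrt2}{1+\sqrt2},\sqrt2)$, taking first returns, which are iterates $T^n$ consisting of $n-1$ steps of the parabolic branch followed by one step of the other branches. The induced map is a full-branched, uniformly expanding M\"obius Markov map with bounded distortion (Adler/R\'enyi condition, automatic for M\"obius branches with uniformly bounded $|T''|/|T'|^2$). It is therefore exact, with an equivalent finite invariant measure, namely $\mu|_Y$.

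Since almost every orbit enters $Y$ (orbits cannot stay in $(0,\frac{\sqrt2}{1+\sqrt2})$ forever, as $t/(1-t)$ pushes points away from $0$), the standard Kac/induced-system argument gives that $T_3$ is conservative and ergodic with respect to $\mu$.

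The main obstacle I expect is the bookkeeping in the second paragraph: matching each $N_i^{-1}$ to the correct interval and orientation under $\pi$, including sign and permutation conventions. The invariance and ergodicity steps are routine.
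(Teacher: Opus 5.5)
Your proposal is correct. Apart from one step it follows the paper's route: the matrix system is cited from Cha et al., the branches come from conjugating the $\widehat{N_i^{-1}}$ by $\pi$, and ergodicity comes from inducing on the union of the non-parabolic cylinders $Y=[2-\sqrt2,\sqrt2)$, then lifting exactness of $T_Y$. Note that $\frac{\sqrt2}{1+\sqrt2}=2-\sqrt2$.

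The bookkeeping you worry about does work out. Homogeneously, $\pi^{-1}(t)=(2-t^2,\ t^2+4t+2,\ t^2+2t+2)$. The conjugate of $\widehat{N_i^{-1}}$ is exactly the $i$-th branch of $T_3$, and $t=1$ is the base triple $(1,7,5)$.

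The real difference is the invariance step. You verify $1/t$ by the Perron--Frobenius identity over the inverse branches, and your computation is right. The paper instead derives the density from its adapted Keane construction (Corollary~\ref{invDensityforT}) with $R=(-\infty,0)$. Here $\phi_1(R)=(-1,0)$, $\phi_2(R)=(-\infty,-2)$ and $\phi_3(R)=(-2,-1)$ tile $R$, and $\int_{-\infty}^0(\eta-t)^{-2}\,d\eta=1/t$. Your check is shorter and self-contained, but it presupposes the density. The paper's method produces the density, gives a Lebesgue-preserving extension as a by-product, and adapts to the non-full-branched map $T_4$.

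Two repairs are needed in the ergodic part.

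\emph{Adler's condition.} It is not automatic for M\"obius branches. The bound $\sup|T_Y''|/(T_Y')^2<\infty$ is exactly what must be checked, and with infinitely many branches it depends on where the poles of the inverse branches lie.
\begin{itemize}
\item Here the inverse branches $\psi_j\circ\psi_1^k$ ($j=2,3$) have their pole at $-2/(2k+1)<0$.
\item For a M\"obius map $g$ one has $|g''/g'|(s)=2/|s-\mathrm{pole}(g)|$, which is at most $2/(2-\sqrt2)$ on $Y$.
\item Uniform expansion is immediate, since $|\phi_j'|>2$ on $Y$ and $(\phi_1^k)'\ge 1$.
\item The first return map is $\phi_1^{n-1}\circ\phi_j$, so the non-parabolic step is applied first, not last as you wrote.
\end{itemize}

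\emph{Conservativity first.} Identifying the induced invariant measure with $\mu|_Y$ uses conservativity of $T_3$. So establish conservativity first, as the paper does, via Maharam's recurrence theorem: $\mu(Y)<\infty$ and $\bigcup_k T_3^{-k}Y=X$ mod $\mu$. Exactness of $T_Y$ then gives ergodicity of $T_3$.
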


\begin{thm}
    For the quadratic form $Q(\underline{x}) = x_1^2 + 2x_2^2 - x_3^2$, the matrix generating system for the solutions in $\{ x_1, x_2, x_3 \in \mathbb{Z} : x_1, x_2, x_3 > 0 \}$ consists of the 3 matrices
    \begin{equation}
        \begin{gathered}
            N_1 = \begin{bmatrix}
                0 & -2 & 1 \\
                1 & -1 & 1 \\
                1 & -2 & 2 \\
            \end{bmatrix}, \quad
            N_2 = \begin{bmatrix}
                0 & 2 & 1 \\
                1 & 1 & 1 \\
                1 & 2 & 2 \\
            \end{bmatrix}, \quad 
            N_3 = \begin{bmatrix}
                0 & 2 & 1 \\
                -1 & 1 & 1 \\
                -1 & 2 & 2\\
            \end{bmatrix},
        \end{gathered}
    \end{equation}
    along with the base triple $(1,2,3)^T$, with the additional restriction that $N_1$ produces a valid solution triple with all entries positive if and only if $x_1 > x_3/\sqrt{2}$. The corresponding dynamical system is given by 
    \begin{equation}\label{1/root2Map}
        T_4(t) = \begin{cases}
            \frac{t}{1-2t} & t \in \left(0, \frac{1}{2+\sqrt{2}}\right), \\
            \frac{1}{2t}-1 & t \in \left( \frac{1}{2+\sqrt{2}}, \frac{1}{2} \right),\\
            1-\frac{1}{2t} & t \in \left(\frac{1}{2}, \frac{1}{\sqrt{2}} \right),
        \end{cases}
    \end{equation}
    with the conjugacy map being $\pi(x,y) = \frac{y}{x+1}$, and possesses the $\sigma$-finite, invariant, ergodic and conservative measure
    \begin{equation}\label{1/root2 Measure}
        d\mu = \left(\mathbbm{1}_{(0, 1/(2+\sqrt{2}) )}\cdot\frac{1}{t(1-t)} + \mathbbm{1}_{(1/(2+\sqrt{2}) , 1/\sqrt{2} )}\cdot \frac{1}{t}\right) \, dt .
    \end{equation}
\end{thm}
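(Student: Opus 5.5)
The plan has four stages: verify the matrix system, project it to the interval, check the density, and then prove conservativity and ergodicity.

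\textbf{Matrix system.} Let $A=\mathrm{diag}(1,2,-1)$. First check directly that $N_i^{T}AN_i=A$ for $i=1,2,3$, so each $N_i$ preserves $Q$. Then I run the standard descent argument of Barning type, following \cite{Cha2018}. Take a positive solution $\underline{x}\neq(1,2,3)^T$. Exactly one of $N_1^{-1}\underline{x}$, $N_2^{-1}\underline{x}$, $N_3^{-1}\underline{x}$ should lie again in the positive region, and it should have strictly smaller third coordinate. Which inverse to use will be decided by comparing $x_1$ with $x_3/\sqrt2$ and similar linear inequalities; this is exactly where the stated restriction on $N_1$ enters. Since $x_3$ decreases strictly and is a positive integer, the descent terminates. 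The terminal point is identified as the base triple $(1,2,3)^T$ by checking the finitely many small solutions. Uniqueness of the encoding follows because the image regions $N_i(S)$ are disjoint.

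\textbf{The interval map.} Set $x=x_1/x_3$ and $y=x_2/x_3$, so that $x^2+2y^2=1$. Stereographic projection from $(-1,0)$ gives $t=\pi(x,y)=y/(x+1)$, with inverse $(x,y)=\bigl((1-2t^2)/(1+2t^2),\,2t/(1+2t^2)\bigr)$. The open positive arc corresponds to $t\in(0,1/\sqrt2)$.

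Each $N_i^{-1}$ acts projectively on the conic. Conjugating by $\pi$ turns this action into a M\"obius map in $t$. I then compute the three branches and check that they agree with $T_4$. The branch domains are $\pi(N_i(\text{arc}))$; the endpoints $1/(2+\sqrt2)=1-1/\sqrt2$ and $1/2$ are the images of the boundary points $x=y\sqrt2$ and $y=0$ of the relevant regions.

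I also record the Markov structure. Put $a=1/(2+\sqrt2)$, $I=(0,a)$ and $J=(a,1/\sqrt2)$.
\begin{itemize}
\item Branch~1 maps $I$ onto $(0,1/\sqrt2)$, since $a/(1-2a)=1/\sqrt2$.
\item Branch~2 maps $(a,1/2)$ onto $(0,1/\sqrt2)$.
\item Branch~3 maps $(1/2,1/\sqrt2)$ onto $I$ only, since $1-\tfrac{1}{\sqrt2}=a$.
\end{itemize}

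\textbf{Invariant density.} Write $h=\mathbbm 1_I\,\frac{1}{t(1-t)}+\mathbbm 1_J\,\frac1t$. I would check the Perron--Frobenius identity $\sum_{Ty=s}h(y)|T'(y)|^{-1}=h(s)$ separately for $s\in I$ and $s\in J$. Points $s\in J$ have preimages under branches 1 and 2 only. Points $s\in I$ have an additional preimage under branch 3. Each sum is a rational-function identity.

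To explain where $h$ comes from, I would follow Keane's method as in Section~\ref{fullbranchcase}. Each M\"obius branch is lifted to a planar map on pairs $(t,u)$ that preserves the measure $du\,dt/(t-u)^2$. One finds a domain $D$ that is invariant up to null sets, and integrating out $u$ over the fibre of $D$ above $t$ gives $h$. The fibres differ over $I$ and over $J$ because branch 3 is not full, and this is the source of the two-piece density.

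\textbf{Conservativity and ergodicity.} The point $0$ is a neutral fixed point: $T_4'(0)=1$ on branch~1, which forces $\mu(I)=\infty$. This step is the main obstacle. My first attempt is to induce on $J$, which has finite measure, or on a set bounded away from $0$. I would show that the first-return map is a Markov map with full (or finite-image) branches that satisfies R\'enyi's bounded-distortion condition. The M\"obius branches have bounded $|T''|/|T'|^2$ away from $0$, and the excursions near $0$ are iterates of the parabolic map $t/(1-2t)$, whose distortion is controlled uniformly. By the folklore/Adler theorem the induced map is then exact with respect to $\mu|_J$.

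Almost every orbit enters $J$, because orbits cannot stay near $0$ forever under a parabolic repeller. Together with the conservativity of the induced map, this lifts to conservativity and ergodicity of $(T_4,\mu)$.
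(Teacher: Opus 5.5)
Your proposal is correct. The dynamical half follows the paper's argument: the same conjugacy, the same Markov structure (branch 3 covers only $(0,\frac{1}{2+\sqrt{2}})$), and the same inducing set $J=[2]\cup[3]$. This set has finite $\mu$-measure and is a sweep-out set, because $\phi_1^k(t)=t/(1-2kt)$ leaves $(0,\frac{1}{2+\sqrt{2}})$ after finitely many steps; the folklore/Aaronson theorem is then applied to the first-return map.

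Where you genuinely differ is in certifying the density. You check the Perron--Frobenius identity directly, and it closes. The preimages of $s$ under $\phi_1,\phi_2,\phi_3$ contribute $\frac{1}{s(1+s)}$, $\frac{1}{1+s}$ and $\frac{1}{1-s}$, which sum to $\frac{1}{s}$ on $J$ and to $\frac{1}{s(1-s)}$ on $I$. The paper instead derives the density from its Lebesgue-preserving extension. It takes $R=(-\infty,0)\cup(1,\infty)$ with $R_1=(-1,0)$, $R_2=(-\infty,-1)$, $R_3=(1,\infty)$, checks $\phi_1^{-1}(R_1)=R$ and $\phi_2^{-1}(R_2)=\phi_3^{-1}(R_3)=(-\infty,0)$, and integrates $(\eta-t)^{-2}$ over the admissible fibre. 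Your check is shorter and fully rigorous once the candidate is known. The paper's construction is what produces the candidate and transfers to the other maps. Your Keane paragraph leaves the invariant domain unspecified, and that partition of $R$ is exactly the nontrivial input there.

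Likewise, the paper only cites Cha et al.\ for the matrix system, while your descent makes it explicit. It works for the following reasons:
\begin{itemize}
\item The three parents are $(\pm(2x_2-x_3),\pm(x_1+x_2-x_3),\,2x_3-x_1-2x_2)$, with sign patterns $(+,-),(+,+),(-,+)$ for $N_1^{-1},N_2^{-1},N_3^{-1}$.
\item The pattern $(-,-)$ never occurs on the positive arc.
\item The only primitive point where the descent stops is where $x_1+x_2=x_3$, i.e.\ $(1,2,3)^T$.
\end{itemize}
So the base triple is identified by this vanishing, not by checking small cases.

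Two minor repairs are needed.
\begin{itemize}
\item State that $\inf|T_J'|>1$. It holds: $|T_J'|\geq 2$ on $[2]$, and $|T_J'(t)|=2(1+(2s-1)T_J(t))^2$ on the branches through $[31^s]$.
\item Obtain conservativity from Maharam's theorem, using $\mu(J)<\infty$ and the sweep-out property, as the paper does. Do not derive it from conservativity of $T_J$: the $\mu_J$-invariance of $T_J$ is normally deduced from conservativity of $T$, so that route is circular.
\end{itemize}
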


The above example is distinct from all others, as this map is not fully branched (i.e. not all of the domains on which the function is defined map to the entire interval $[0, 1/\sqrt{2})$). This is due to the condition on $N_1$ -- this is why the invariant measure looks significantly different from the others. However, we make explicit note that it is still a Markov map. 

The final example is a map for which the associated matrix system for generating solutions for the quadratic form is entirely novel, and thus will be the throughline for the paper (since the method of computation is best explained with a working example).

\begin{thm}
    For the quadratic form $Q(\underline{x}) = x_1^2 + x_1x_2 + x_2^2 - 3x_3^2$, the matrix generating system for the solutions in $\{ x_1, x_2, x_3 \in \mathbb{Z} : x_2> x_1 >0, x_3 >0 \}$ consists of the five matrices 
    \begin{equation}\label{root3matrices}
    \begin{gathered}
        N_1 = \begin{bmatrix}
            1 & 0 & 0 \\
            3 & 7 & 12 \\
            2 & 4 & 7 \\
        \end{bmatrix}, \quad 
        N_2 = \begin{bmatrix}
            0 & 1 & 0 \\
            7 & 3 & 12 \\
            4 & 2 & 7 \\
        \end{bmatrix}, \quad 
        N_3 = \begin{bmatrix}
            1 & 1 & 0 \\
            -4 & 3 & 2 \\
            -2 & 2 & 7 \\
        \end{bmatrix}, \\
        N_4 = \begin{bmatrix}
            1 & 1 & 0 \\
            3 & -4 & 12 \\
            2 & -2 & 7 \\
        \end{bmatrix}, \quad 
        N_5 = \begin{bmatrix}
            0 & 1 & 0 \\
            -7 & -4 & 12\\
            -4 & -2 & 7 \\
        \end{bmatrix},
        \end{gathered}
    \end{equation}
    along with the base triples $(2,11,7)^T$ and $(1, 22, 13)^T$. The corresponding 1-dimensional dynamical system (up to homeomorphic conjugacy) is 
    \begin{equation}\label{root3map}
        T_5(t) = \begin{cases}
            \frac{t}{1-t} & t \in \left(0, \frac{\sqrt{3}}{1+\sqrt{3}} \right), \\[0.5em]
            \frac{3-3t}{t} & t \in \left(\frac{\sqrt{3}}{1+\sqrt{3}}, 1 \right)x,\\[0.5em]
            \frac{3-3t}{2t-3} & t \in \left(1, \frac{3+\sqrt{3}}{2+\sqrt{3}} \right),\\[0.5em]
            \frac{3-2t}{t-1} & t \in \left( \frac{3+\sqrt{3}}{2+\sqrt{3}}, \frac{3}{2} \right),\\[0.5em]
            \frac{2t-3}{2-t} & t \in \left(\frac{3}{2}, \sqrt{3} \right),\\
        \end{cases}
    \end{equation}
    with the conjugacy map being  $\pi(x,y) = \frac{y-x}{x+1}$, and possesses the $\sigma$-finite, invariant, ergodic and conservative measure
    \begin{equation} d\mu = \frac{1}{t}\, dt.\end{equation}
\end{thm}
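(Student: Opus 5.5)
The statement has three components, and I would prove them in order: the matrix generating system, the induced interval map, and the invariant measure with its ergodic properties.

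\textbf{Step 1: the matrix generating system.} First I would check that each $N_i$ preserves the form $Q(\underline{x}) = x_1^2 + x_1x_2 + x_2^2 - 3x_3^2$. Writing $Q(\underline{x}) = \underline{x}^T A \underline{x}$ with
\[ A = \begin{bmatrix} 1 & 1/2 & 0 \\ 1/2 & 1 & 0 \\ 0 & 0 & -3 \end{bmatrix}, \]
this amounts to verifying $N_i^T A N_i = A$ for each $i$. This is a finite computation, but it is the natural first test of the matrices as printed; for instance the middle row of $N_3$ would need rechecking. Next, following Cha et al., I would define a descent map $D$ on the integer zeroes in $S = \{x_2 > x_1 > 0,\ x_3 > 0\}$. $D$ applies the inverse $N_i^{-1}$ singled out by the region of the cone containing $\underline{x}$. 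I would then show two things. First, the regions $N_i(\overline{S})$ tile $\overline{S}$ up to boundaries. Second, $N_i^{-1}$ strictly decreases $x_3$ unless $\underline{x}$ is one of the base triples $(2,11,7)^T$ or $(1,22,13)^T$. Since $x_3$ is a positive integer, descent terminates, which gives existence of the expression. Uniqueness follows from the disjointness of the regions.

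\textbf{Step 2: the interval map.} Divide by $x_3$ and pass to rational points on the ellipse $x^2 + xy + y^2 = 3$. I would apply $\pi(x,y) = \frac{y-x}{x+1}$ and compute its inverse explicitly by solving a quadratic: this is the stereographic projection through $(-1,?)$ composed with a shear. Then for each branch, conjugate the projective action of $N_i^{-1}$ by $\pi$ to obtain a M\"obius map in $t$. The branch endpoints are the images under $\pi$ of the boundaries $N_i(\partial S)$, which produces $0$, $\frac{\sqrt3}{1+\sqrt3}$, $1$, $\frac{3+\sqrt3}{2+\sqrt3}$, $\frac32$, $\sqrt3$. Finally I would check that each branch maps its interval onto $(0,\sqrt3)$, so that the map is full-branched.

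\textbf{Step 3: the invariant measure.} For invariance of $\frac{dt}{t}$, I would use Keane's method. Represent $t$ as a root of a quadratic and track how the coefficients transform under each branch; the candidate density is $1/t$. Alternatively, I would verify the transfer-operator identity
\[ \sum_i \frac{|(g_i)'(s)|}{g_i(s)} = \frac{1}{s}, \]
where the $g_i$ are the five inverse branches, each mapping $(0,\sqrt3)$ onto its subinterval. This is a telescoping sum of partial fractions, and I would arrange the branches so that the cancellation is visible. Since $\int_0 dt/t = \infty$, the measure is $\sigma$-finite but infinite.

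\textbf{Step 4: conservativity and ergodicity.} The only non-expanding point is the parabolic fixed point $0$, where the branch is $t/(1-t)$. I would induce on a set $Y = (c,\sqrt3)$ bounded away from $0$. The first-return map to $Y$ is a uniformly expanding Markov map with countably many full branches and bounded distortion, so Adler's folklore theorem gives that it is ergodic and has a finite a.c.i.m. Conservativity holds because $\mu(Y) < \infty$ and almost every orbit returns to $Y$, since orbits leave neighbourhoods of $0$. Ergodicity then lifts from $Y$ to the whole interval, because $\bigcup_n T^{-n}Y$ has full measure.

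I expect the main obstacle to be Step 1, specifically the tiling claim together with the strict decrease of $x_3$ away from the base triples. This requires describing the five regions $N_i(S)$ in terms of linear inequalities on the cone and handling the boundary cases where $x_3$ stays fixed; these are exactly what produce the two base triples. I would attack it by working in the $t$-coordinate, where the regions are the intervals of Step 2. There, descent corresponds to the fact that $T_5$ pushes rational points toward the fixed points, so Steps 1 and 2 are best carried out simultaneously.
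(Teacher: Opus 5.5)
\textbf{Assessment.} Your Steps 2--4 are sound and follow the paper's route. You are also right to distrust the printed $N_3$: since $N_3 = T_{\underline{w}}U_3$ with $T_{\underline{w}} = s_{(0,2,1)}$, its middle row is $(-4,3,12)$, and only with the entry $12$ does $N_3$ preserve $Q$ (for instance $N_3(2,11,7)^T = (13,109,67)^T$).

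\textbf{The gap: termination in Step 1.} You say the base triples come from ``boundary cases where $x_3$ stays fixed''. That is false. Because $N_i^{-1} = U_i^{-1}T_{\underline{w}}$ and every $U_i$ has third row $(0,0,1)$, all five inverses have third row $(-2,-4,7)$. On the cone
\[ (x_1+2x_2)^2 - 9x_3^2 = (x_2-x_1)(x_2+2x_1) > 0 \quad \text{for } x_2 > x_1 > 0, \]
so $7x_3 - 2x_1 - 4x_2 < x_3$ at \emph{every} zero in $S$, base triples included: both $(2,11,7)^T$ and $(1,22,13)^T$ drop to height $1$. Searching for fixed-height points finds nothing, so your plan never identifies the roots of the tree.

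What actually stops the descent is that the point lies on a wall between two adjacent tiles. There both candidate inverses send it to $(1,1,1)^T \in \partial S$:
\[ N_3^{-1}(2,11,7)^T = N_4^{-1}(2,11,7)^T = (1,1,1)^T, \qquad N_1^{-1}(1,22,13)^T = N_2^{-1}(1,22,13)^T = (1,1,1)^T. \]
In the $t$-coordinate the walls are $\frac{\sqrt3}{1+\sqrt3}$, $3-\sqrt3$, $1$, $\frac32$. The first two are preimages of $t=\sqrt3$, the ray $x_1=0$, which carries no rational points. The last two, $t=1$ and $t=\frac32$, are exactly the two base triples: the rational points that $T_5$ sends to the endpoint $t=0$.

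With this in place of the fixed-$x_3$ claim, the descent closes. A rational point of $S$ off the walls lies in exactly one open tile and has a successor in $S$ of strictly smaller height; the two wall points have none. Uniqueness follows because wall points are not of the form $N_i\underline{y}$ with $\underline{y}\in S$. The $t$-picture helps locate the walls, but termination comes from the integer height $x_3$, not from the behaviour of $T_5$ near its parabolic fixed point.

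\textbf{Comparison with the paper.}
\begin{itemize}
\item Step 1: the paper runs no descent. It takes $N_i = T_{\underline{w}}U_i$ from Cha et al.'s algorithm, with $\underline{w} = (0,2,1)$, $Q(\underline{w}) = 1$, and five symmetries $U_i$ of $x_1^2+x_1x_2+x_2^2$ whose images of $FA_Q$ tile $\widehat{T_{\underline{w}}}(FA_Q)$. It imports the tree structure from their theorem, so your corrected descent is a self-contained substitute for that citation.
\item Step 2: the projection point is $(-1,-1)$, and $\pi$ is simply the $y$-intercept of the line through $(-1,-1)$ and $(x,y)$; no shear is needed.
\item Step 3: the paper uses Keane's extension with $R=(-\infty,0)$. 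It checks that $\phi_1(R),\dots,\phi_5(R)$, namely $(-1,0)$, $(-\infty,-3)$, $(-\frac32,-1)$, $(-3,-2)$, $(-2,-\frac32)$, tile $R$, giving density $\int_{-\infty}^0(\eta-t)^{-2}\,d\eta = 1/t$. Your transfer-operator identity does hold:
\[ \frac{1}{s(s+1)} + \frac{1}{s+3} + \frac{1}{(s+1)(2s+3)} + \frac{1}{(s+2)(s+3)} + \frac{1}{(s+2)(2s+3)} = \frac{1}{s}. \]
Its $i$-th term equals $\int_{\phi_i(R)}(\eta-s)^{-2}\,d\eta$, so your telescoping is precisely that tiling of $R$. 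The direct check is quicker; Keane's construction explains where $1/t$ comes from.
\item Step 4: take $c = \frac{\sqrt3}{1+\sqrt3}$, so that $Y=[2]\cup\dots\cup[5]$ is a union of cylinders and the first-return branches $\phi_1^k\circ\phi_i$ are full. This is the paper's $J$. Conservativity is Maharam's recurrence theorem for a finite-measure sweep-out set, and Adler's theorem replaces the paper's appeal to Aaronson's exactness results.
\end{itemize}
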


The last system stated will be our point-of-view for the rest of the paper, since this is the only example for which the calculation of the matrices was not already completed in \cite{Cha2018} as an example.

\section{Solution Generating Systems}\label{MatricesSection}
The result of producing Pythagorean triples using the matrix system is well-known. In \cite{Cha2018}, Cha et al.\ set out to define a method of producing these collections of matrices for arbitrary quadratic forms on $\mathbb{R}^3$. Precise details of their method can be found in \cite[Algorithm 3.5]{Cha2018}, and many of the details will be omitted in this section. However, some key details of their method are useful later in this article, in which case these will be stated without proof.

To each quadratic form $Q$, we may associate the symmetric bilinear form 
\begin{equation}
    \langle \underline{x}, \underline{y} \rangle_Q = \frac{1}{2}\left( Q(\underline{x}+\underline{y}) - Q(\underline{x}) - Q(\underline{y}) \right). 
\end{equation}
and additionally, we can associate a symmetric matrix $M_Q$ to this symmetric bilinear form (and consequently to $Q$).
\begin{defn}
    A matrix $A$ is \underline{orthogonal with respect to $Q$} if $\langle A\underline{x}, A\underline{y} \rangle_Q = \langle \underline{x}, \underline{y} \rangle_Q$ for all vectors $\underline{x}, \underline{y} \in \mathbb{R}^3$. Denote the set of all such matrices by $ O_Q(\mathbb{R})$. A matrix is \underline{integral} (w.r.t. a given basis of $\mathbb{R}^3$) if all the entries of the matrix are integers. 
\end{defn}
For shorthand, we write $O_Q(\mathbb{Z})$ for the collection of all matrices that are orthogonal w.r.t. $Q$ and integral. Note that the set of orthogonal matrices is closed under multiplication.

For an integral vector $\underline{w} \in \mathbb{Z}^3$, define the transformation $s_{\underline{w}}$ by 
\begin{equation}
    s_{\underline{w}}(\underline{x}) := \underline{x} - 2\frac{\langle \underline{x}, \underline{w} \rangle_Q}{Q(\underline{w})}\underline{w},
\end{equation}
and note that this transformation is self-inverse. 

Finally, for a form $Q$ of signature $(2,1)$ with non-zero integral solutions to $Q(\underline{x}) = 0$, whose associated bilinear form is non-degenerate, we may define an action of matrices $A \in O_Q(\mathbb{R})$ on affine cross-sections of the surface. Most commonly, the cross-section $\mathcal{E}$ chosen will be defined by $x_3=1$. The appropriate projection map $p_\mathcal{E}$ normalises the $x_3$ coordinate, so 

\[ p_\mathcal{E}((a,b,c)^T) = \left( \frac{a}{c}, \frac{b}{c} \right) \]

though this can be generalised to other projections onto different cross-sections by changing basis. For the third coordinate projection case, writing the matrix $A$ with respect to the standard basis, for any $(x,y) \in \mathcal{E}$, define 
\begin{equation}\label{projectionDef}
    \widehat{A}(x,y) = \frac{1}{r_3(A)\underline{v}}A \underline{v}
\end{equation}
where $\underline{v}$ is any preimage of $(x,y)$ under $p_\mathcal{E}$, and $r_3(A)$ is the third row of $A$. This is well-defined since $A$ is linear on $\mathcal{R}^3$ (so any preimage can be taken) and orthogonal with respect to $Q$ (so $A$ preserves the surface $Q(\underline{v}) = 0$). Again, this will generalise to different projections following the same idea. The hat notation will be used to refer to the appropriate action of $A$ on $\mathcal{E}$ moving forward.

Now, the procedure from \cite{Cha2018} will be discussed. To start, a region $F_Q\subset \mathbb{R}^3$ is chosen according to some conditions -- this will be the region for which the matrix system will generate the integer solutions of. An appropriate projection is also chosen, with $FA_Q$ denoting the projection of \newline$F_Q \cap \{\underline{v} :  Q(\underline{v})=0 \}$. There are some additional properties $FA_Q$ must satisfy, however these are omitted in this article for brevity (see \cite{Cha2018} for more details). 

We now highlight an important step in their construction which gives rise to some useful facts for this article. In \cite{Cha2018}, a vector $\underline{w}$ is chosen so that $Q(\underline{w}) \neq 0$, and that $s_{\underline{w}}$ is integral (which will ensure that the output matrices will be integral). The system of matrices $N_1, ..., N_k$ produced are defined by 
\[ N_i = T_{\underline{w}} U_i \]
where $T_{\underline{w}} = \pm s_{\underline{w}}$ depending on the sign of $Q(\underline{w})$ -- take positive sign if $Q(\underline{w})>0$ and vice versa -- and the $U_i \in O_Q(\mathbb{Z})$ are chosen such that 
\begin{equation}\label{nonIntersecting}
    \left| \widehat{T_{\underline{w}}^{-1}}(FA_Q) \setminus \bigcup_{i=1}^k \widehat{U_i}(FA_Q) \right| < \infty \quad \mathrm{and} \quad \widehat{U_i}(FA_Q) \cap \widehat{U_j}(FA_Q) = \emptyset
\end{equation}
for all $i \neq j$. Notably, this means that $N_i \in O_Q(\mathbb{Z})$ for all $i$. We may also replace $T_{\bm{w}}^{-1}$ with $T_{\bm{w}}$, as $T_{\bm{w}}$ is self-inverse.

The method identifies which are the "base triples", and defines the "Berggren Tree" as the collection of graph-theoretic trees with nodes being integer solutions to the quadratic form in $F_Q$, base nodes being the base triples, and two nodes connected by an edge if one can be reached by another by right-multiplying by some matrix $N_i$. Cha et al.\ \cite{Cha2018} note that this construction is indeed a collection of trees, meaning that it contains no cycles and there is a path between any two given nodes along edges, and therefore any integer solution in $F_Q$ has a unique path along edges to one of the base triples, and only occurs in one of the trees.

\begin{exmp}\label{exmpl1}
    Here is the example presented for illustrative purposes. Let \[Q(\underline{x}) = x_1^2 + x_1x_2 + x_2^2 -3x_3^2 \quad \mathrm{and}\quad F_Q = \{ \underline{x} \in \mathbb{Z}^3 : x_2>x_1>0, x_3>0 \}.\] Note that $(2,11,7)$ is indeed a solution to $Q(\underline{x})=0$ in $F_Q$, so the conditions for the form are satisfied. Setting $\underline{w}= (0,2,1)$, one can compute that $Q(\underline{w}) = 1 > 0$, and thus
    \[T_{\underline{w}} = s_{\underline{w}} = \begin{bmatrix}
        1 & 0 & 0 \\
        -4 & -7 & 12 \\
        -2 & -4 & 7\\
    \end{bmatrix}.\]
    One can then also verify that the collection of matrices 
    \begin{equation*}
    \begin{gathered}
        U_1 = \begin{bmatrix}
            1 & 0 & 0 \\
            -1 & -1 & 0 \\
            0 & 0 & 1\\
        \end{bmatrix}, \quad 
        U_2 = \begin{bmatrix}
            0 & 1 & 0\\
            -1 & -1& 0 \\
            0 & 0 & 1 \\
        \end{bmatrix}, \quad
        U_3 = \begin{bmatrix}
            1 & 1 & 0 \\
            0 & -1 & 0 \\
            0 & 0 & 1 \\
        \end{bmatrix},\\
        U_4 = \begin{bmatrix}
            1 & 1 & 0 \\
            -1 & 0 & 0 \\
            0 & 0 & 1\\
        \end{bmatrix}, \quad
        U_5 = \begin{bmatrix}
            0 & 1 & 0 \\
            1 & 0 & 0 \\
            0 & 0 & 1
        \end{bmatrix},
    \end{gathered}
    \end{equation*}
    does indeed satisfy \eqref{nonIntersecting}, and consequently we may define $N_i = T_{\underline{w}} U_i$ as specified in \eqref{root3matrices}.
\end{exmp}

\section{Coding Solutions and Dynamical Systems}\label{MapsSection}
This section deals with encoding the solutions to the quadratic form $Q$ in the specified subset of $\mathbb{Z}^3$ using the tree of solutions constructed using the matrix generating system. Following the definition of codes for points, a natural resulting problem is that of computing codes for given solutions. This is solved by noting a bijection exists between the rational points of the ellipsoid $\mathcal{E} \subset \mathbb{R}^{2}$ created by dividing through by the "final coordinate", as illustrated in Section \ref{MatricesSection}. The resulting map is a 1-dimensional endomorphism on $\mathcal{E}$, and can then be conjugated to an interval map by performing a stereographic projection from some other chosen point on the ellipsoid.

From here, assume that the matrix generating system (and therefore Berggren tree) has been computed, and that the quadratic form is in 3 variables. This means that any triple integer solution exists as a vertex in the Berggren tree, with a unique path along edges to one of the base triples. Equivalently, for any solution $(a,b,c)^T \in F_Q \subseteq \mathbb{Z}^3$ (where $F_Q$ is the region of integer solutions we are interested in), we have a unique representation 
\begin{equation}\label{TripleMatrixRep}
    \begin{bmatrix}
        a \\ b \\ c
    \end{bmatrix} = M_{a_1}M_{a_2} \cdots M_{a_k} \underline{b}
\end{equation}
where $\underline{b}$ is one of the base triples, and $M_i$ are the matrices that generate the integer solutions, both found via the method in Section \ref{MatricesSection}. By uniqueness, one can identify the triple $(a,b,c)^T$ with the sequence $(a_1, \ldots, a_n)$, and by putting restrictions on the space of allowed sequences, this identification is a bijection.

Now, define an algorithm that computes the sequence for a given triple. First, recall that there exists a bijection between primitive integer solutions to $Q(\underline{x})$ (i.e. there is no common divisor between $a,b$ and $c$) and rational points on the ellipse $\mathcal{E}$. Define $FA_Q \subseteq \mathcal{E}$ to be the arc of $\mathcal{E}$ whose rational points correspond to triples in $F_Q$. Recall the definition of the action of $\widehat{M}$ (for orthogonal matrices $M \in O_Q(\mathbb{R})$ as presented in \eqref{projectionDef}). Note again that this can be generalised to different "normalisation" projections by a change of coordinate basis.

Recall additionally that the images $\widehat{U_i}(FA_Q)$ were disjoint (see \eqref{nonIntersecting}). To each $M_i$, there is an associated subset $D_i \subset FA_Q$ such that $\widehat{M_i}(D_i) \subset FA_Q$. One may write this $D_i$ explicitly as 
\[ D_i := \widehat{U_i^{-1}}( \widehat{T_{\bm{w}}}(FA_Q) \cap \widehat{U_i}(FA_Q)),\]
whence the image of $D_i$ under $\widehat{M_i}$ may be computed to be
\[ \widehat{M_i}(D_i) = FA_Q \cap \widehat{M_i}(FA_Q).\]
By both properties of \eqref{nonIntersecting}, these $\widehat{M_i}(D_i)$ are also disjoint, and they completely tile $FA_Q$ except for at most finitely many points. Consequently, for any rational point in $FA_Q$, the leftmost matrix in the representation \eqref{TripleMatrixRep} is uniquely determined by the region of $FA_Q$ the rational point lies in.

Construct a map from $\mathcal{E}$ to itself as follows. Let $\mathcal{D}_i := \widehat{M_i}(D_i)$. Since these form a partition of $FA_Q$, we may define the map $\widehat{T}$ by
\[ \widehat{T}(\underline{x}) = \widehat{M_i^{-1}}(\underline{x}) \ \mathrm{for} \ \underline{x} \in \mathcal{D}_i\cap \mathbb{Q}^2.\]
Then $\widehat{T}$ is well-defined (for every rational point except at most finitely many), and under the identification of rational points in $FA_Q \subset \mathcal{E}$ with matrix products of the form \eqref{TripleMatrixRep}, acts like
\begin{equation}
     M_{a_1}M_{a_2} \cdots M_{a_k} \underline{b} \longmapsto M_{a_2}M_{a_3} \cdots M_{a_k} \underline{b},
\end{equation}
i.e. by a shift map on the space of sequences. Then, returning the code of a rational point in $S$ using $\widehat{T}$ is a matter of recording which regions the orbit of the point lands in. More explicitly, let 
\begin{equation}
    d(\underline{x}) = \begin{cases}
        i & \mathrm{if}\ \underline{x} \in \mathcal{D}_i, \\
        \underline{b} & \mathrm{if} \ \underline{x} = \underline{b}, \ \mathrm{and} \ \underline{b} \ \mathrm{is \ a \ base \ triple}, \\
        0 & \mathrm{else}.
    \end{cases}
\end{equation}
Then the sequence associated with $\underline{x}$ via the identification \eqref{TripleMatrixRep} is exactly 
\[ \left(d\left(\widehat{T^i}(\underline{x})\right)\right)_{i=0}^{\infty}\ .  \]
Conjugating this to a map on some bounded interval in $\mathbb{R}$ is a matter of finding an appropriate stereographic projection from some other chosen point in $\mathcal{E}$. There are many options of projection points which result in maps on bounded intervals, but the ones presented in this article are chosen so that the interval is of the form $(0, t)$ for some positive $t$. 

\begin{exmp}
    Return to the setting of Example \ref{exmpl1} with $Q(\underline{x}) = x_1^2 + x_1x_2 + x_2^2 - 3x^3 $, and the collection of matrices that generate all solutions in $\{ \underline{x} \in \mathbb{R}^3 : x_2>x_1 > 0, x_3>0\}$ being
    \begin{equation}
    \begin{gathered}
        N_1 = \begin{bmatrix}
            1 & 0 & 0 \\
            3 & 7 & 12 \\
            2 & 4 & 7 \\
        \end{bmatrix}, \quad 
        N_2 = \begin{bmatrix}
            0 & 1 & 0 \\
            7 & 3 & 12 \\
            4 & 2 & 7 \\
        \end{bmatrix}, \quad 
        N_3 = \begin{bmatrix}
            1 & 1 & 0 \\
            -4 & 3 & 2 \\
            -2 & 2 & 7 \\
        \end{bmatrix}, \\
        N_4 = \begin{bmatrix}
            1 & 1 & 0 \\
            3 & -4 & 12 \\
            2 & -2 & 7 \\
        \end{bmatrix}, \quad 
        N_5 = \begin{bmatrix}
            0 & 1 & 0 \\
            -7 & -4 & 12\\
            -4 & -2 & 7 \\
        \end{bmatrix},
        \end{gathered}
    \end{equation}
\end{exmp}
with base triples $(1,22,13)$ and $(2,11,7)$. The ellipse given by normalising the last coordinate is $\mathcal{E} = \{ (x,y) : x^2 + xy + y^2 = 3\}$, with $FA_Q = \{ (x,y) \in \mathcal{E} : y>x>0\}$. Constructing the associated map $\hat{T}$ as outlined yields
\begin{equation*}
    \hat{T}(x,y) = \begin{cases}
        \left( \frac{x}{1-2x-4y}, \frac{3x+7y-12}{7-2x-4y}\right) & \mathrm{if}\ y/x > 22, \\
        \left( \frac{3x+7y-12}{7-2x-4y}, \frac{x}{7-2x-4y} \right) & \mathrm{if}\ 22> y/x > 4\sqrt{3} + 3,\\
        \left( \frac{12-3x-7y}{7-2x-4y}, \frac{4x+7y-12}{7-2x-4y} \right) & \mathrm{if}\ 4\sqrt{3} + 3 > y/x > 11/2,\\
        \left( \frac{4x+7y-12}{7-2x-4y}, \frac{12-3x-7y}{7-2x-4y} \right) & \mathrm{if} \ 11/2 > y/x > 4\sqrt{3} - 4 ,\\
        \left(\frac{12-4y-7x}{7-2x-4y}, \frac{x}{7-2x-4y} \right) & \mathrm{if}\  4\sqrt{3} - 4 > y/x > 1.
    \end{cases}
\end{equation*}
To conjugate this to an interval map, we can stereographically project onto the $y$-axis from the point $(-1,-1) \in \mathcal{E}$. The explicit formula for this is
\[ \pi(x,y) = \frac{y-x}{x+1}, \quad \pi^{-1}(t) = \left(\frac{3-t^2}{t^2 + 3t + 3}, \frac{2t^2 + 6t + 3}{t^2 + 3t + 3} \right),\]
which yields an interval map on $[0, \sqrt{3})$ with formula
\begin{equation*}
    T(t) = \begin{cases}
        \frac{t}{1-t} & t \in \left(0, \frac{\sqrt{3}}{1+\sqrt{3}} \right),\\
        \frac{3-3t}{t} & t \in \left(\frac{\sqrt{3}}{1+\sqrt{3}}, 1\right), \\
        \frac{3-3t}{2t-3} & t \in \left(1, 3-\sqrt{3} \right), \\
        \frac{3-2t}{t-1} & t \in \left( 3-\sqrt{3}, \frac{3}{2}\right), \\
        \frac{2t-3}{2-t} & t \in \left(\frac{3}{2}, \sqrt{3} \right).\\
    \end{cases}
\end{equation*}
For illustrative purposes, a graph of $T$ is presented below.
    \begin{center}
        \begin{tikzpicture}
  \draw[->, scale=2] (-0.1, 0) -- (1.73205, 0) node[right] {$t$};
  \draw[->,scale=2] (0, -0.1) -- (0, 1.73205) node[above] {$T(t)$};
  \draw[scale=2, domain=0:0.63397, smooth, variable=\x, blue] plot ({\x}, {\x/(1-\x)});
  \draw[scale=2, domain=0.63397:1, smooth, variable=\x, blue] plot ({\x}, {(3-3*\x)/(\x)});
  \draw[scale=2, domain=1:1.26795, smooth, variable=\x, blue] plot ({\x}, {(3-3*\x)/(2*\x-3)});
  \draw[scale=2, domain=1.26795:1.5, smooth, variable=\x, blue] plot ({\x}, {(3-2*\x)/(\x-1)});
  \draw[scale=2, domain=1.5:1.73205, smooth, variable=\x, blue] plot ({\x}, {(2*\x-3)/(2-\x)});
\end{tikzpicture}
    \end{center}

\section{Building Extensions and Finding Invariant Measures}\label{ExtensionSection}

We now build an extension of the solution map following Keane's method in \cite{Keane95}, i.e. forcing the extension to be linear. This way, verifying that Lebesgue measure is an invariant measure of the extension (defined on the appropriate domain) is a matter of computing determinants. As it turns out, many of the steps in Keane's paper remain untouched, in particular calculating the push-forward of the extension's invariant measure onto the original system via the appropriate projection is identical. Thus, every invariant measure of some map formed in this manner is exhibited as the integral of the same density over some region of $\mathbb{R}$, which is determined by the domain of the linear extension. 

There is much interest in computing natural extensions (or indeed even just extentions) of number theoretic maps. Dajani, Kraaikamp and Solomyak \cite{Dajani96} detail a method of calculating the invariant ergodic measures of the $\beta$ transformation by explicitly constructing a particular presentation of the natural extension of the map $x \mapsto \beta x $ mod 1, which also happens to be Lebesgue-preserving (in the appropriate component). In this article, we are not necessarily interested in verifying that the extension we build is the natural extension -- in order to compute invariant measures, it is only necessary to construct a normal extension -- however a particular step in Keane's construction \cite{Keane95} can be viewed as the natural extension of the Gauss map, so it may not be difficult to show that the same is true in our adapted case. This could potentially be observed by noting the adapted Keane method appears similar to a form of the De Sitter space associated with the piecewise Mobius transformations \cite{panti2020}. The De Sitter space is a representation of the natural extension of discrete groups of Mobius transformations, which provides a link between invariant measures of piecewise Mobius transformations and geodesic flows on hyperbolic surfaces. The following construction which is an effort to generalise Keane's method is one way of computing a representation of this space. For this article, we do not go more into detail on this subject, as the connection (while strong) has not been mathematically verified, and the construction below is more explicit and perhaps more mechanically illuminating.

More specifically, there has been much work conducted in the construction of natural extensions of piecewise M\"{o}bius transformations (see \cite{arnoux17, calta24,PANTI202250}), so it should be explicitly stated that this line of investigation is definitely not novel. However, all of said papers and articles do not mention these specific maps as examples, nor do they draw connections to maps derived from matrix systems that generate solutions for quadratics like in \cite{Cha2018} (with the notable exception of \cite{PANTI202250} that mentions only the Romik map, and deals predominantly with transformations arising from Hecke group actions). Furthermore, while \cite{calta24} does build Lebesgue-preserving extensions (albeit in a slightly different manner to how this article does), both it and \cite{arnoux17} are primarily concerned with proving results for probability preserving transformations, and omit much discussion that could be had about transformations with infinite invariant measures, of which the maps in Section \ref{Main Results} are examples.

\subsection{Maps with Full Branches}\label{fullbranchcase}

First, we define the action of a piecewise M\"{o}bius transformation on the space of quadratic forms in one variable. Suppose the piecewise transformation $T : X \rightarrow X$, where $X\subset \mathbb{R}$, consists of full branches (i.e. each branch maps its domain to $X$ Lebesgue-almost entirely). These branches will be denoted by $\phi_i(x) = \frac{px+q}{rx+s}$, and defined on $D_i$, with index set $i \in I \subseteq \mathbb{N}$. Assume without loss of generality that $ps-rq = \pm1$. For any $\alpha \in D_i$, we have that there is a unique choice of $\beta \in X$ such that $\alpha = \phi_i^{-1}(\beta)$. Additionally, for any $\alpha \in X$, there exist real numbers $a,b,c$ such that the quadratic polynomial $p(x) = ax^2 + bx -c $ has two distinct real roots, with one of them equal to $\alpha$. Via some algebraic manipulation, we see that 

\begin{align}
    p(\alpha) = a\alpha^2 + b\alpha - c = a \left( \frac{s\beta-q}{-r\beta+p} \right)^2 + b \frac{s\beta - q}{-r\beta + p} - c &= 0 \nonumber \\
    \implies a(s\beta - q)^2 + b(s\beta - q)(-r\beta + p) - c(-r\beta + p)^2 &= 0 \nonumber \\
    \implies \beta^2( as^2 -brs -cr^2) + \beta(-2asq + b(ps+qr) + 2cpr) - (-aq^2 + bqp + cp^2) &= 0.\label{algebraic manip}
\end{align}

Thus, define 

\begin{equation}\label{phi action}
   \overline{p}(x) := x^2( as^2 -brs -cr^2) + x(-2asq + b(ps+qr) + 2cpr) - (-aq^2 + bqp + cp^2),
\end{equation}

and define the action of $\phi_i$ on the space of quadratic forms to send $p$ to $\overline{p}$. This way, by the construction of $\overline{p}$, the original dynamics are preserved in the location of one of the roots of the quadratic. Notice that we could equally have chosen $-\overline{p}$ by simply moving the entire equation to the RHS of \eqref{algebraic manip} -- this option will be eliminated later. 

We may represent the actions in terms of the coefficients of the polynomials $p$ and $\overline{p}$, which can be easily observed to be a linear function, and therefore represented by a matrix. For this $\phi_i$, denote the action on quadratic forms by $\overline{\phi}_i$, whose matrix is given by
\begin{equation}\label{MatrixAction}
M(\overline{\phi}_i) = \begin{bmatrix}
    s^2 & -rs & -r^2 \\
    -2sq & ps+qr & 2pr \\
    -q^2 & qp & p^2 \\
\end{bmatrix}
\end{equation}
with determinant $(ps - qr)^3 = (\pm 1)^3$, thus this linear map preserves Lebesgue measure. The same is true for every branch $\overline{\phi}_i$. Furthermore, via similar algebraic manipulation, we can see that each $\overline{\phi}_i$ preserves the discriminant of the polynomial, i.e. \[\Delta(p) = b^2 + 4ac = b^2(ps-qr)^2 + 4(ps-qr)^2ac=\Delta(\overline{p}).\]

The final and most important step is to now construct restrictions on $a,b,c \in \mathbb{R}$ such that this map is well-defined and bijective. Firstly, we demand that $a>0$ -- this is to establish that we only have one correct choice between $\overline{p}$ and $-\overline{p}$ (exactly which of these is chosen is not important, only that we have exactly one output). Next, demand that the second root of the polynomial $p$ lies outside of $X$ -- this is to ensure that the collection of maps $\overline{\phi}_i$ have non-overlapping domains, as the domain of each $\overline{\phi}_i$ will be all the polynomials with exactly one root in $D_i$.

Thirdly, we look at how each map $\overline{\phi}_i$ moves the second root of $p$.
Suppose that $\Tilde{\alpha}$ is the other root of $p$. We may write $\Tilde{\alpha} = \phi_i^{-1}(\Tilde{\beta})$, implying that 
\[ \Tilde{\beta} = \phi_i(\Tilde{\alpha}) = \frac{p\Tilde{\alpha} + q}{r\Tilde{\alpha}+s}. \]
If we find some subset $R \subseteq \mathbb{R}\setminus X$ satisfying the disjoint union property
\begin{equation}\label{DisjointUnion} R = \bigsqcup_{i \in I} \phi_i(R) \quad \mathrm{mod} \ \lambda, \end{equation}
then it immediately follows that the collection $\overline{\phi}_i$, $i \in I$ forms a Lebesgue-a.e.\ bijection on the space of quadratic forms that have exactly two roots such that $\alpha \in X$ and $\Tilde{\alpha} \in R$. This motivates the following definition.

\begin{defn}
Let $\Sigma \subseteq \mathbb{R}^3$ be defined as
\[ \Sigma = \left\{ ax^2 + bx - c : a>0 \ \mathrm{and\ } \left(\frac{-b + \sqrt{b^2 + 4ac}}{2a}, \frac{-b - \sqrt{b^2 + 4ac}}{2a}\right) \in (X \times R) \cup (R \times X) \right\}, \]
i.e. the set of quadratic polynomials with positive $x^2$ coefficient and exactly one root in $R$ and one root in $X$.
\end{defn}

From here onwards, for any polynomial $p\in \Sigma$, when writing its roots $(\alpha, \beta)$ as a tuple, we make the unambiguous choice that $\alpha \in X$ and $\beta \in R$. We have now proven the following theorem.

\begin{thm}
Suppose that we have a continuous piecewise  M\"{o}bius map $T:X \rightarrow X$ defined by $\phi_i$ on $D_i \subset X \subset \mathbb{R}$, i.e.
\[ T|_{D_i} = \phi_i,\]

and that there exists a region $R$ satisfying \eqref{DisjointUnion}. Then the map $\overline{T}:\Sigma \rightarrow \Sigma$ defined as $\overline{\phi}_i$ on the collection of $p\in \Sigma$ that have a root in $D_i$ is a Lebesgue preserving almost-everywhere bijection on $\Sigma$ as parameterised by the coefficients of the polynomial, and is an extension of $T$.
\end{thm}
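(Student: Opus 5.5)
The plan is to verify three things in order: that $\overline{T}$ is well defined as a map $\Sigma\to\Sigma$, that it is an a.e. bijection, and that it preserves Lebesgue measure. The extension property is then automatic, since the projection $\rho(p)=\alpha$ (the root in $X$) satisfies $\rho\circ\overline{T}=T\circ\rho$ by construction \eqref{algebraic manip}.

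\textbf{Step 1 (well-definedness).} Take $p\in\Sigma$ with roots $(\alpha,\tilde\alpha)\in X\times R$ and $\alpha\in D_i$. By \eqref{algebraic manip}, $\overline{\phi}_i(p)$ vanishes at $\beta=\phi_i(\alpha)=T(\alpha)\in X$. Running the same computation with $\tilde\alpha$ shows it also vanishes at $\tilde\beta=\phi_i(\tilde\alpha)\in\phi_i(R)\subseteq R$. The discriminant is preserved, since $\Delta(\overline{p})=(ps-qr)^2\Delta(p)=\Delta(p)>0$, so the roots remain real and distinct. Because $R\cap X=\emptyset$, exactly one root lies in each set. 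Finally, the sign is fixed as the paper indicates: among $\pm\overline{p}$, choose the one with positive leading coefficient. This is possible because the leading coefficient is nonzero; it vanishes only when $\phi_i^{-1}$ sends a root to $\infty$, which happens on a null set. The resulting choice is unique, so $\overline{T}(p)\in\Sigma$.

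\textbf{Step 2 (bijectivity).} The sets $\Sigma_i=\{p\in\Sigma:\alpha(p)\in D_i\}$ partition $\Sigma$ up to null sets. I would show that $\overline{\phi}_i$ (with the sign normalisation) maps $\Sigma_i$ bijectively onto $\Sigma^{(i)}:=\{q\in\Sigma:\beta(q)\in\phi_i(R)\}$. Injectivity follows from the invertibility of \eqref{MatrixAction}, since its determinant is $\pm1$. For surjectivity, take $q\in\Sigma^{(i)}$ and apply the matrix of $\phi_i^{-1}$. This gives a polynomial with roots $\phi_i^{-1}(\alpha(q))\in D_i$ and $\phi_i^{-1}(\beta(q))\in R$, where the first membership uses that the branch is full, $\phi_i(D_i)=X$ a.e. Then \eqref{DisjointUnion} shows that the images $\Sigma^{(i)}$ are disjoint and cover $\Sigma$ modulo sets whose root-pairs form a null set. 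I would pass from null sets of roots to null sets of coefficients through the map $(a,\alpha,\beta)\mapsto(a,-a(\alpha+\beta),a\alpha\beta)$, which is a local diffeomorphism off the diagonal. Hence $\overline{T}$ is an a.e. bijection.

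\textbf{Step 3 (measure preservation).} On each $\Sigma_i$, the map $\overline{T}$ is linear up to a global sign, with determinant $(ps-qr)^3=\pm1$. It therefore preserves Lebesgue measure on pieces. Combined with the a.e. bijectivity from Step 2, this gives $\lambda(\overline{T}^{-1}A)=\sum_i\lambda(\overline{\phi}_i^{-1}(A\cap\Sigma^{(i)}))=\lambda(A)$.

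The main obstacle I expect is Step 2. Two points need care there: the sign choice must be compatible with inverting the branch, and the orientation of the tuple in the definition of $\Sigma$ must be tracked. Both require converting the mod-$\lambda$ statement \eqref{DisjointUnion} on the line into a statement about Lebesgue-null sets in coefficient space, using the root-coordinates change of variables above.
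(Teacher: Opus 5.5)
Your proposal is correct and follows the same route as the paper. The $a>0$ normalisation and discriminant invariance give well-definedness, the full branches together with $R=\bigsqcup_i\phi_i(R)$ give a.e. bijectivity, the determinant $(ps-qr)^3=\pm1$ gives Lebesgue preservation, and tracking the root in $X$ exhibits $T$ as a factor; you are more careful than the paper's sketch in transferring null sets between root and coefficient coordinates, and your only imprecision is calling the sign \emph{global}, since it equals $\mathrm{sgn}\bigl((r\alpha+s)(r\tilde\alpha+s)\bigr)$ and can vary on $\Sigma_i$, though $\overline{T}$ is then piecewise linear with $|\det|=1$ on each piece and Step 3 is unaffected.
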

\begin{proof}[\underline{Proof}]
    As stated above, most of the proof has already been completed, however we make special mention of how $\overline{T}$ can be seen as an extension of $T$. 

    The collection of quadratic polynomials $\Sigma$ can be parameterised in one of two ways: by the location of roots in tandem with the discriminant, or by the three coefficients of the polynomial. The map looks different depending on which parameter space we choose to act on, and our theorem only makes the claim that $\overline{T}$ is Lebesgue preserving on the coefficient parameter space, but the map is still well-defined on both spaces. As a consequence of this, it is easy to see how the original dynamics exists as a factor: observe how the root of $X$ is moved.
\end{proof}

It it from this point onward that the method in \cite{Keane95} is followed almost identically -- using the discriminant preserving property, we may restrict $\overline{T}$ to $\Sigma_{\Delta = 1} = \{ (a,c) : (a, \sqrt{1-4ac}, c) \in \Sigma \}$, setting $b = \sqrt{1-4ac}$ and therefore projecting $\overline{T}$ to a function $\overline{T}_\Delta$ in $a,c$ alone. Then using some differential geometry, we confirm that $\frac{1}{\sqrt{1-4ac}} \, dadc$ is an invariant density of $\overline{T}_\Delta$ -- in the case where we have the Gauss map as the base dynamical system, this is the dynamical system that can be viewed as the natural extension of the Gauss map.

Finally, observing that  
\[ ax^2 + \sqrt{1-4ac}x - c = 0 \iff 1 = \left( \frac{c}{x} + ax \right)^2,\]
we may construct simultaneous equations for the two roots $\omega$ and $\eta$:
\begin{align*}
    \frac{c}{\omega} + a\omega &= -1, \\
    \frac{c}{\eta} + a\eta &= 1,
\end{align*}
(assuming both are equal to 1 results in a contradiction, similarly for assuming both equal $-1$), which can be rearranged to find that 
\begin{equation}\label{ac equations}
    c = \frac{\omega\eta}{\omega - \eta}, \quad a = \frac{1}{\eta - \omega}.
\end{equation}
Note: this is mildly different to \cite{Keane95}, though the idea is the same. Using \eqref{ac equations}, we may compute the invariant density $\frac{1}{\sqrt{1-4ac}}\, dadc$ in terms of $\omega, \eta$ using a transformation of variables to be
\begin{equation}
    \frac{1}{(\eta - \omega)^2}\, d\omega d\eta .
\end{equation}
Observing the square in the denominator, it follows that it doesn't matter whether $\alpha \in X$ is $\omega$ or $\eta$ in \eqref{ac equations}, as the invariant density in terms of $\omega, \eta$ will not change. Finally, by integrating out over the root $\Tilde{\alpha} \in R$, the result will be the invariant density of $T$. This is collated in the following.

\begin{cor}\label{invDensityforT}
    For any continuous piecewise M\"{o}bius transformation $T: X \rightarrow X$ defined with countably many branches $\phi_i$ with all branches full, if there exists some $R \subset \mathbb{R}\setminus X$ such that 
    \[ R = \bigsqcup_{i \in I} \phi_i(R),\]
    then an absolutely continuous invariant measure for $T$ is given by
    \begin{equation}\label{invMeasureFullBranch}
\left(\int_R \frac{1}{(\eta - \omega)^2} \, d\eta \right) \, d\omega.
\end{equation}
\end{cor}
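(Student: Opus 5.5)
The plan is to work directly in root coordinates rather than passing through the coefficient space and the discriminant restriction. Consider the set $\Omega = X \times R \subset \mathbb{R}^2$ of pairs $(\omega,\eta)$, and the map $\widetilde{T}(\omega,\eta) = (\phi_i(\omega), \phi_i(\eta))$ for $\omega \in D_i$. This is precisely how $\overline{T}$ acts on the two roots of a polynomial in $\Sigma$. By the earlier theorem (or directly), $\widetilde{T}$ is an a.e.\ bijection of $\Omega$. Fullness of branches gives $\phi_i(D_i) = X$ a.e. Together with $R = \bigsqcup_i \phi_i(R)$, this shows that the sets $\phi_i(D_i)\times\phi_i(R) = X \times \phi_i(R)$ tile $\Omega$ up to null sets.

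Next I will verify that $\nu = (\eta-\omega)^{-2}\,d\omega\,d\eta$ is $\widetilde{T}$-invariant. This is the key computation, and it is a standard identity for a M\"obius map $\phi(x) = \frac{px+q}{rx+s}$ with $ps-qr=\pm1$:
\[
\phi(\omega)-\phi(\eta) = \frac{(ps-qr)(\omega-\eta)}{(r\omega+s)(r\eta+s)}, \qquad \phi'(x) = \frac{ps-qr}{(rx+s)^2}.
\]
It follows that
\[
\frac{|\phi'(\omega)|\,|\phi'(\eta)|}{(\phi(\omega)-\phi(\eta))^2} = \frac{1}{(\omega-\eta)^2}.
\]
So on each piece $D_i\times R$, the change of variables formula gives $\nu(\widetilde{T}(A)) = \nu(A)$ for measurable $A \subseteq D_i \times R$. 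Combined with the a.e.\ bijectivity, this yields $\nu(\widetilde{T}^{-1}B) = \sum_i \nu(\widetilde{T}^{-1}B \cap (D_i\times R)) = \nu(B)$. Alternatively, I could cite the invariance of $\frac{1}{\sqrt{1-4ac}}\,da\,dc$ and the change of variables \eqref{ac equations} stated just above. The direct computation is cleaner, however, and it avoids the issue of which root is $\omega$, since the density is symmetric.

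Finally, I will push $\nu$ forward under the projection $\pi_1(\omega,\eta)=\omega$. Since $\pi_1\circ\widetilde{T} = T\circ\pi_1$, for any measurable $E\subseteq X$ we have $\mu(T^{-1}E) = \nu(\pi_1^{-1}T^{-1}E) = \nu(\widetilde{T}^{-1}\pi_1^{-1}E) = \nu(\pi_1^{-1}E) = \mu(E)$. Fubini then identifies $\mu$ with the density $\int_R (\eta-\omega)^{-2}\,d\eta$.

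The main obstacle is making sure this density is finite a.e., so that $\mu$ is a genuine $\sigma$-finite absolutely continuous measure. Since $R\subset\mathbb{R}\setminus X$, for $\omega$ in the interior of $X$ the distance from $\omega$ to $R$ is positive. Hence the integral is bounded by $\int_{|\eta-\omega|\ge \delta}(\eta-\omega)^{-2}\,d\eta < \infty$ (this also allows $R$ to be unbounded). The density is therefore finite and locally integrable on compact subsets of the interior of $X$, which gives $\sigma$-finiteness, although it may blow up at endpoints shared with $\overline{R}$. That blow-up is exactly the infinite-measure behaviour seen in the examples. Countably many branches cause no difficulty, since all sums are of nonnegative terms.
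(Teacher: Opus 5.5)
Your proof is correct, but it takes a different route from the paper's.

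\textbf{The paper's route.} The paper lifts $T$ to the coefficient space of quadratics $ax^2+bx-c$. There each branch acts linearly with determinant $(ps-qr)^3=\pm1$, so Lebesgue measure in $(a,b,c)$ is invariant. It then uses the invariance of the discriminant to restrict to the slice $\Delta=1$. On that slice it invokes Keane's argument (the ``differential geometry'' step) to obtain the invariant density $\frac{1}{\sqrt{1-4ac}}\,da\,dc$. Only then does it change variables to the roots, via $a=\frac{1}{\eta-\omega}$ and $c=\frac{\omega\eta}{\omega-\eta}$, to reach $(\eta-\omega)^{-2}\,d\omega\,d\eta$, and finally it integrates out $\eta$ over $R$.

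\textbf{Your route.} You bypass the coefficient space and the discriminant slicing entirely. You verify invariance of $(\eta-\omega)^{-2}\,d\omega\,d\eta$ under the diagonal action $(\omega,\eta)\mapsto(\phi_i(\omega),\phi_i(\eta))$ directly, from the identities for $\phi(\omega)-\phi(\eta)$ and $\phi'$. This is shorter and needs neither the normalisation $ps-qr=\pm1$ nor the sign conventions on $a$ and $b$. It also replaces the paper's rather informal slicing step with an explicit Jacobian computation. You make explicit several points the paper leaves implicit:
\begin{enumerate}
\item the tiling argument behind invariance, using fullness of branches and $R=\bigsqcup_i\phi_i(R)$;
\item the factor relation $\pi_1\circ\widetilde{T}=T\circ\pi_1$ that justifies the push-forward;
\item the local boundedness of the density on the interior of $X$, which gives $\sigma$-finiteness.
\end{enumerate}

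\textbf{What each buys.} The paper's approach is a derivation rather than a verification. In coefficient coordinates the invariant measure is just Lebesgue measure, which is how one finds the density without guessing it; that is the point of Keane's method. It also produces a genuinely linear, Lebesgue-preserving extension, which the paper wants for its natural-extension questions. Your approach presupposes the candidate density, which is the classical invariant measure on pairs of endpoints of hyperbolic geodesics, and then checks it.
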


\begin{rem}
    This corollary is reminiscent of Panti \cite[Theorem 3.4]{PANTI202250}. In particular, Panti deals with maps constructed from Hecke group considerations, and computes natural extensions and the invariant measures for said extensions. The origin of the maps in this article are not based in Hecke groups, though it is noted that the Romik map can be viewed in that manner, so it is not implausible that with some clever choices of conjugacy one could view many of these maps derived from solution generating systems similarly. However, it is not clear (at least to the author) how to determine the correct Hecke group and/or conjugacy for a given Berggren tree and matrix system.
\end{rem}

\begin{exmp}\label{exmplInvariantDensity} 
The following is an illustration of this method for the map $T_5$ as defined in \eqref{root3map}. For convenience, ennumerate the branches of the map from top to bottom (i.e. first branch is $\phi_1$, and so on). The exact matrices will not be calculated here, as Corollary \ref{invDensityforT} notes that explicit construction of the extension is unnecessary, provided that the only result of interest is the measure. Note that $R = (-\infty, 0)$ indeed satisfies \eqref{DisjointUnion}:
\begin{align*}
    \phi_1(t) = \frac{t}{1-t} &\implies \phi_1(R) = \left( -1, 0 \right), \\
    \phi_2(t) = \frac{3-3t}{t} &\implies \phi_2(R) = \left( -\infty , -3\right), \\
    \phi_3(t) = \frac{3-3t}{2t-3} &\implies \phi_3(R) = \left( -\frac{3}{2}, -1 \right), \\
    \phi_4(t) = \frac{3-2t}{t-1} &\implies \phi_4(R) = \left( -3, -2 \right), \\
    \phi_5(t) = \frac{2t-3}{2-t} &\implies \phi_5(R) = \left( -2, -\frac{3}{2} \right).\\
\end{align*}
Consequently, $\int_{-\infty}^0 \frac{1}{(\eta - \omega)^2} d\eta = \frac{1}{\omega}$ is an invariant density for \eqref{root3map}.
\end{exmp}

\subsection{Non-Full Branches}
Not every example presented in Section \ref{Main Results} has the full-branch property. The construction of the Lebesgue-preserving extension is still possible, however the conditions that $R$ satisfies must be adapted slightly. This sub-section presents the appropriate generalisation.

Assume that, instead of each M\"{o}bius branch of the transformation $T:X \rightarrow X$ mapping $D_i$ to the entire interval $X$, the branch $\phi_i$ maps $D_i$ to the union of some collection of $D_j$'s. This implies that $T$ is a Markov map with $\{D_i\}_{i \in I}$ providing an appropriate Markov partition, where $I$ is some index set. The appropriate sequence space for which the shift map is conjugate to $T$ can therefore be defined in terms of forbidden words of length 2, or equivalently is defined in terms of which words of length 2 are valid. Let the collection of valid words of length 2 be $W^+$.

To ensure that the constructed extension on the space of appropriate quadratic forms is well-defined and bijective, the region $R$ in which the other roots of the polynomials $p$ reside must satisfy the following: 
\begin{itemize}
    \item $R \subseteq \mathbb{R} \setminus X$, so that the map is well-defined, \\
    \item There exists a partition of $R$ into $\{R_i\}_{i \in I}$ such that 
    \begin{equation}\label{markovMapCondition}
        \phi_i^{-1}(R_i) = \bigcup_{\{j \ :\ ji \in W^+\}} R_j \quad \mathrm{mod} \ \lambda,
    \end{equation}
    so that the map is Lebesgue-a.e.\ bijective.
\end{itemize}
Additionally, impose the restriction on $\Sigma$ (the space of polynomials) that every polynomial has two distinct real roots, say $\omega$ and $\eta$, satisfying
\begin{equation}\label{pRootsCondition}
\omega \in D_i, \eta \in R_j \implies ji \in W^+.\end{equation}
This aids in ensuring the almost-everywhere bijectivity of the extension, as then the "other root" of the polynomials are always mapped into an appropriate partition element of $R$ by the action of the extension.

Again, most of the analysis from Section \ref{fullbranchcase} remains intact, with the exception that integrating out the roots of the polynomials not in $X$ is no longer simply integrating over the entirety of $R$. Instead, perform the integration piecewise with respect to the partition $\{D_i\}_{i \in I}$. Letting 
\[ R(D_i) := \bigcup_{\{j\ :\ ji \in W^+ \}} R_j,\]
the invariant measure for $T$ will now be described by
\begin{equation}
\sum_{i \in I}\left( \mathbbm{1}_{D_i}(\omega) \cdot \int_{R(D_i)} \frac{1}{(\eta - \omega)^2} \, d\eta \right) \, d\omega.
\end{equation}

\begin{rem}
    In the event all branches are full, this formula exactly returns \eqref{invMeasureFullBranch}, so this is (in some sense) truly a generalisation of the previous process.
\end{rem}

\begin{exmp}
    Using the map $T_4$ as an example (see \eqref{1/root2Map}), set $R = (-\infty, 0) \cup (1, \infty)$ and define an indexed partition by 
    \[ R_1 = (-1, 0), \quad R_2 = (-\infty, -1), \quad R_3 = (1, \infty),\]
    Enumerate the branches from top to bottom again, and label the domain of branch $\phi_i$ by $D_i$. Note that the appropriate restriction on the sequence space forbids the words "$32$" and "$33$" (which can be seen by observing that $\phi_3$ maps its domain onto the domain of $\phi_1$, or alternatively that the $N_1$ matrix which corresponds to the action $\phi_3^{-1}$ only produces valid edges for certain triples). Then it can be verified that 
    \[ \phi_1^{-1}(R_1) = R = R_1 \cup R_2 \cup R_3, \quad \phi_2^{-1}(R_2) = \phi_3^{-1}(R_3) = ( - \infty , 0) = R_1 \cup R_2,\]
    and so \eqref{markovMapCondition} is indeed satisfied. Therefore some computation of integrals yields that the invariant measure has the form 
    \begin{align}\label{1/root2 density}
        d\mu &= \left[\mathbbm{1}_{D_1}(t) \cdot \int_{R_1 \cup R_2 \cup R_3}\frac{1}{(\eta - t)^2}\, \mathrm{d}\eta + \mathbbm{1}_{D_2\cup D_3}(t) \cdot \int_{R_1 \cup R_2}\frac{1}{(\eta - t)^2} \, \mathrm{d}\eta  \right] \, \mathrm{d}t \nonumber \\
        &= \left(\mathbbm{1}_{D_1}(t)\cdot\frac{1}{t(1-t)} + \mathbbm{1}_{D_2 \cup D_3}(t)\cdot \frac{1}{t}\right) \, dt,
    \end{align}
    as anticipated.
\end{exmp}

Recall that the main results of the paper are concerning constructing examples of these 1-D dynamical systems associated with solution generating systems (and their invariant measures). Currently, no claim is made about whether this process works in generality, however we make the following conjecture in this vein.

\begin{conj}
    Whenever the Cha-Nguyen-Tauber procedure produces a solution generating system, one may construct a presentation of the solution generating system that is a 1-D piecewise M\"{o}bius transformation. Additionally, this map possesses a $\sigma$-finite, absolutely continuous and invariant measure, which makes the system conservative and ergodic.
\end{conj}

\underline{Partial Proof}: The construction of a 1-D piecewise Mobius transformation is not too difficult to see. One may recall that the definition of the hyperboloid model of hyperbolic geometry looks at the positive sheet of the curve $\{ (x,y,z) \in \mathbb{R}^3 : x^2 + y^2 -  z^2 = -1\}$ equipped with the metric 
    \[ d_H (\underline{x}, \underline{y}) = \arccosh\left(-\langle \underline{x}, \underline{y} \rangle \right),\]
    where $\langle -, - \rangle $ is the symmetric bilinear form associated with the quadratic form $x^2 + y^2 - z^2$. One may analogously construct another model of the hyperbolic plane by using a different non-degenerate form of signature $(2,1)$: we may observe the set $\{ Q(\underline{x}) = -1\}$ equipped with the metric 
    \[ d_H(\underline{x}, \underline{y}) = \arccosh \left( -\langle \underline{x}, \underline{y}\rangle_Q\right),\]
    where $\langle-,-\rangle_Q$ is the associated symmetric bilinear form as defined previously. As any such form is diagonalisable, one can see that the process of diagonalising (and then normalising in the coordinate directions if necessary) defines an isometry between the standard hyperboloid model of the hyperbolic plane and the new model with the above metric. This process also identifies which is the "positive sheet" of the surface $\{ Q(\underline{x}) = -1\}$.
    
    In the standard hyperboloid model, the isometries are exactly the positive orthogonal matrices $PO(2,1)$ (i.e. orthogonal matrices that preserve the positive light cone). Note that these are not necessarily orientation preserving. Conjugating any element of $PO(2,1)$ through the isometries onto the new model gives a matrix that is orthogonal with respect to $Q$ and that preserves the identified "positive sheet". 
    
    The process of Cha-Nguyen-Tauber requires that the quadratic form $Q$ is of signature $(2,1)$, is non-degenerate, and the matrices selected to form the solution generating system must obey these properties. Due to this, we have that each matrix is an isometry of the new model of the hyperbolic plane. Consequently, when we compose the appropriate standard isometries of hyperbolic plane models to find an isometry that takes $\{ Q(\underline{x}) = -1\}$ to the upper half plane model, the conjugated action of the matrix under this isometry must give a M\"{o}bius transformation of determinant $\pm 1$, and we are done.

    The remainder of the conjecture is yet to be proven, however it is likely that the De Sitter space will be very useful in computing the proposed absolutely continuous invariant measures. It may also be useful for ergodicity (and conservativity if necessary) of the systems, though that remains to be seen. It must also be noted that the isometry described above that maps $\{Q(\underline{x}) = -1\}$ onto the upper-half plane is not necessarily the same as the particular projection chosen in the computed examples (which were chosen so that we have a map on an interval $[0,a]$ and such that rationality of points is preserved).

\subsection{Ergodic Properties}
Now that invariant measures have been established, standard techniques from infinite ergodic theory can be utilised to prove ergodicity and conservativity. Romik uses these techniques in \cite{Romik2004} for the ergodicity and conservativity of the Romik map, and the methodology presented in this section is much the same. Similar to Romik's case, explicit calculation is omitted as it is easily verified independently, and this article only presents the method for one of the maps presented, since the method is very similar for the other cases.

We choose the map $T_4$ (defined in \eqref{1/root2Map}) to study as the example here, since this being non-full branched causes it to be the most arduous case. The methodology for the other examples is essentially the same. As is often standard in infinite ergodic theory, first conservativity must be shown, so that inducing is well-defined. 

Recall the definition of $T_4$:
\begin{equation*}
        T_4(t) = \begin{cases}
            \phi_1(t) = \frac{t}{1-2t} & t \in \left(0, \frac{1}{2+\sqrt{2}}\right), \\
            \phi_2(t) = \frac{1}{2t}-1 & t \in \left( \frac{1}{2+\sqrt{2}}, \frac{1}{\sqrt{2}} \right),\\
            \phi_3(t) = 1-\frac{1}{2t} & t \in \left(\frac{1}{2}, \frac{1}{\sqrt{2}} \right),
        \end{cases}
    \end{equation*}
and the density of the absolutely continuous invariant measure $\mu$ as given in \eqref{1/root2 density}.

Formally, this will be considered as a map on the set of points in $\left(0 , \frac{1}{\sqrt{2}} \right]$ that are not pre-images of $0$ under $T_i$, i.e. an automorphism on
\[  X := \left( 0 , \frac{1}{\sqrt{2}} \right] \setminus \left( \bigcup_{i=0}^\infty T^{-i}\left( \{0\} \right) \right). \]
However, since the set of removed points have Lebesgue measure 0, for ease of notation this article will often use intervals of the form $(a,b)$ to refer to $X \cap (a,b)$, and likewise for subsets $A \subset \left(0, \frac{1}{\sqrt{2}}\right]$.

As per Section \ref{MapsSection}, set $[1]:= \left(0, \frac{1}{2+\sqrt{2}}\right]$, $[2] := \left[ \frac{1}{2+\sqrt{2}}, \frac{1}{\sqrt{2}} \right)$ and $[3] :=  \left(\frac{1}{2}, \frac{1}{\sqrt{2}} \right]$. Sequences in the alphabet $\{1,2,3\}$ may be associated to points $x \in X$, and in order to have this identification be a bijection, the additional restrictions imposed on the sequence space are that no sequence may end in an infinite series of 1's, and that the words "32" and "33" are forbidden (the forbidden word conditions are immediate from the definition of $T_4$, and the "infinite 1's" condition follows by observing that $0\in \overline{[1]}$ and is a fixed point of $T_4$).

Define the following set:
\begin{equation}
    J:= ([2]\cup[3])\cap X \neq \emptyset.
\end{equation}
Then, using the invariant measure $\mu$ as given by \eqref{1/root2 Measure}, $J$ has finite measure, and every admissible sequence must be a member of the union $\bigcup_{k=0}^\infty T_4^{-k}(J)$ -- the elements of $X$ not already in $J$ are those beginning with at least one 1, but every sequence of 1's in $X$ must be finite. Consequently, 
\[ X = \bigcup_{k=0}^\infty T_4^{-k}(J) \quad \mathrm{mod}\ \mu, \]
and by Maharam's recurrence theorem (see \cite[Theorem 1.1.7]{Aaronson97}), $T_i$ is conservative.

Now, construct the induced map on $J$. Set $n_J(t) := \min\{n \in \mathbb{N} : T_4^{n} \in J \} $, and define for all $t \in J$,
\begin{equation*} 
T_J(t) = T_4^{n_J(t)}(t) \quad \mathrm{and} \quad 
\mu_J(A) = \mu(A\cap J),
\end{equation*}

which is well defined since within the conservativity proof, it is also shown that $J$ is a sweep-out set. Note that the action of $T_J$ on the space of admissible sequences is some number of left-shifts until the first time that the first digit of the image not equal to 1 (i.e. $(2,1,1,1,3,1,2,3,...)$ is mapped to $(3,1,2,3,...)$). Define a partition of $J$ by
\[ \mathcal{P} := \{ [21^k2], [21^k3], [31^{s}2], [31^s3] : k \geq 0, s\geq1 \}.\]
Observing that $[2] = \bigcup_{k=0}^\infty([21^k2]\cup[21^k3])\ a.e.$ and $[3] = \bigcup_{s=1}^\infty([31^s2] \cup [31^s3] )$, it is clear that $\mathcal{P}$ is a Markov partition for $T_J$. On the elements of the partition, $T_J$ has one of the following forms:
\[ (\phi_1)^k \circ \phi_2(t) = \frac{1-2t}{2t(1+2k)-2} \quad (\phi_1)^s\circ\phi_3(t) = \frac{2t-1}{2t(1-2s)+2s}.\]
Note that these branches of $T_J$ are strictly monotonic on the elements of the partition, and extend to twice continuously differentiable functions on the closure of the partition elements.

It can then be verified by direct calculation that $T_J$ is expanding with bounded distortion, i.e. that (respectively)
\[ \inf_{J}{(T_J)'} > 1 \quad \mathrm{and} \quad \sup_J{\frac{|(T_J)''|}{((T_J)')^2}} < \infty,\]
though care must be taken to restrict the branches of $T_J$ to the partition elements on which they are defined to return the bounded distortion property. With these properties satisfied, we have that $T_J$ is an expanding $C^2$ Markov interval map, which means we may use the results of \cite[Proposition 4.3.3]{Aaronson97}\footnote{This proposition implies that $T_J$ has a sufficiently nice distortion property, but the exact statement requires some more notation and is not illuminating for the proof, so is omitted here.}.

Since $T_J$ is also $\mu_J$-preserving, and $\mu_J$ is finite (and absolutely continuous with respect to Lebesgue), conservativity of $T_J$ is evident, thus by this and the results from \cite[Proposition 4.3.3]{Aaronson97}, we may apply \cite[Theorem 4.4.7]{Aaronson97} to return that $T_J$ is exact with respect to the induced measure $\mu_J$. Then, by \cite[Proposition 1.5.2]{Aaronson97}, $T_4$ is ergodic.

Adapting this method to prove ergodicity of the other systems is a matter of choosing an appropriate $J$ to induce on, though a good rule of thumb is to start with $J$ being a union of the length 1 cylinder sets where the invariant density is bounded, and add in length 2 cylinder sets where necessary to ensure that the pre-images of $J$ cover $X$.

\begin{exmp}
    We finish our through-line example of studying $T_5$ here. From Example \ref{exmplInvariantDensity}, we have that an invariant density for $T_5$ is $\frac{1}{t}\, \mathrm{d}t$, and we have the labelling $\phi_1, \ldots, \phi_5$ for the branches. Choose to induce on $J := \bigcup_{i=2}^5 [i] = \left( \frac{\sqrt{3}}{\sqrt{3}+1}, \sqrt{3} \right)$ so that the induced measure is finite. Showing that $T_5$ is conservative is reasonably simple with the same technique as the above example, so is omitted here. Thus, we only need to show that $(T_5)_J $ is a conservative expanding $C^2$ Markov interval map to prove ergodicity of $T_5$. 

    Since the corresponding induced measure on $J$ is finite and $T_J$-invariant, $T_J$ is automatically conservative. The partition 
    \[ \mathcal{P} = \left\{ \bigcup_{i=2}^5 [j 1^k i] : j \in \{1, \ldots, 5\}, k \geq 0 \right\}\]
    is easily verified to be a Markov partition, and on each partition element, $(T_5)_J$ is of exactly the form $\phi_1^k\circ \phi_j$ and maps the partition element to the entirety of $J$. The form of $(T_5)_J$ on $\bigcup_{i=2}^5 [2 1^k i]$ will be $\phi^n_1 \circ \phi_2$, which calculated explicitly is
    \begin{equation*} 
    f = \phi^n_1 \circ \phi_2 (t) = \frac{3-3t}{(3n+1)t - 3n}.
    \end{equation*}
    Call this $f$ to simplify the following. The end points of the set $\bigcup_{i=2}^5 [2 1^k i]$ are those points $t$ where $f(t) = \sqrt{3}$ and $f(t) = \frac{\sqrt{3}}{1+\sqrt{3}}$, which (when explicitly computed) yields 
    \begin{equation}\label{intervalExplicit} \bigcup_{i=2}^5 [2 1^k i] = \left( \frac{3n+\sqrt{3}+3}{3n+\sqrt{3}+4}, \frac{3n+\sqrt{3}}{3n + \sqrt{3} + 1} \right).\end{equation}
    The first and second derivatives of $f$ are
    \[ \frac{\mathrm{d}\,f}{\mathrm{d}t} = \frac{-3}{((3n+1)t - 3n)^2}, \quad \frac{\mathrm{d}^2f}{\mathrm{d}t^2} = \frac{6(3n+1)}{((3n+1)t - 3n)^3},\]
    and the values of $t$ for which $\left|\frac{\mathrm{d}\,f}{\mathrm{d}t}\right| > 1$ are exactly the interval $\left( \frac{6n - \sqrt{3}}{6n + 2}, \frac{6n + \sqrt{3}}{6n+2}\right)$ which entirely contains \eqref{intervalExplicit}. By evaluating the endpoints of \eqref{intervalExplicit} into the formula for $\frac{\mathrm{d}\,f}{\mathrm{d}t}$, we can also verify that as $n \rightarrow \infty$, $\frac{\mathrm{d}\,f}{\mathrm{d}t}$ remains bounded away from 1, whence we have that $\inf_{[2]} (T_5)_J > 1$. For the distortion property, note that 
    \[ \frac{\left|\frac{\mathrm{d}^2f}{\mathrm{d}t^2} \right|}{(\frac{\mathrm{d}\,f}{\mathrm{d}t})^2} = \frac{6}{9} (3n+1)((3n+1)t - 3n),\]
    which can also be confirmed to be finite over the interval \eqref{intervalExplicit}, and as $n \rightarrow \infty$, it will remain bounded and will not grow arbitrarily large. 

    The same argument applies for all other forms for the branches of $(T_5)_J$, hence we have that it is an expanding $C^2$ Markov interval map. By \cite[Proposition 4.3.3]{Aaronson97} and \cite[4.4.7]{Aaronson97}, $(T_5)_J$ is exact, and therefore ergodic, and consequently $T_5$ is ergodic (as $J$ can be confirmed to be a sweep-out set).
\end{exmp}

\section{The Farey Map as a Projection}
One further example that \cite{Cha2018} notes is for the quadratic form $Q(\underline{x}) = -x_1x_2 - x_2x_3 - x_3x_1$. The collection of matrices that generate solutions in $\{ x_1, x_2> 0,\ x_3<0 \}$ is 
\begin{equation}
    N_1 = \begin{bmatrix}
        2 & 0 & 1 \\
        2 & 1 & 0 \\
        -1 & 0 & 0 \\
    \end{bmatrix},\quad  
    N_2 = \begin{bmatrix}
        2 & 1 & 0 \\
        2 & 0 & 1 \\
        -1 & 0 & 0\\
    \end{bmatrix},
\end{equation}
with base triple $(2,2,-1)^T$. Choosing to normalise in the $x_1$-coordinate, the actions obtained operate on the arc of the ellipse $\{ x + y + xy = 0 \}$ defined by $x>0, y<0$, which will be denoted by $\mathcal{E}$. The collection of $\widehat{N_1^{-1}}, \widehat{N_2^{-1}}$ defines an endomorphism on this arc as usual. 

Choose the slightly different projection that takes any point $(x,y)\in \mathcal{E}$ to the gradient of the straight line joining $(x,y)$ and $(-2,-2)$ -- with the convention that $(\infty, -2)$ is mapped to $0$ -- the projection is explicitly written as
\begin{equation}
    \pi(x,y) = 2 \cdot \frac{y-x}{x+2}, \quad \pi^{-1}(t) = \left( \frac{1-t}{t}, \frac{(1-t)^2}{t} \right).
\end{equation}
The result of conjugating the map that is given piecewise by the $\widehat{N_i^{-1}}$ (on non-overlapping domains) under this projection yields exactly the Farey map:
\begin{equation*}
    \mathcal{F}(t) = \begin{cases}
        \frac{t}{1-t} & t \in \left(0, \frac{1}{2} \right), \\
        \frac{1-t}{t} & t \in \left( \frac{1}{2}, 1 \right).
    \end{cases}
\end{equation*}
Similarly to \cite{BANDI2023108871, calta24, PANTI202250}, the methods of computing extensions and the invariant measure of the Farey map still apply here. 

This raises an interesting question - which piecewise M\"{o}bius transformations can be exhibited as a projection of some solution generating system? Since many of these piecewise M\"{o}bius transformations are closely related to continued fraction style expansions, it is also reasonable to ask which of these continued fraction style maps are projections of these matrix systems via the method of Romik.

In a slightly different vein, Cha et al.\ make note that their methodology of producing these Berggren trees of matrices can generalise to higher dimensions. Garrity and Duke \cite{garrity2024} discuss ergodic properties of the slow and fast triangle maps, noting that these are one generalisation of the Farey map and continued fraction map respectively to higher dimensions that is particularly appropriate. Is it possible to exhibit these triangle maps in higher dimensions as projections of solution generating systems for higher dimensional quadratic forms?

\section{A Note on Pythagorean Quadruples}\label{QuadruplesSection}
As a final remark in \cite{Cha2018}, Cha et al.\ note that their method of constructing matrix collections that generate solutions is not necessarily restricted to quadratic forms in 3 variables. They compute a collection of seven matrices that generates all positive primitive Pythagorean quadruples, with the only major difference being that the associated Berggren tree has infinitely many root nodes. This can also be adapted into a dynamical system via constructing an algorithm for computing quadruple codes, albeit this system will now be 2-Dimensional. 

A natural following question is whether similar methods can be used to determine extensions of these systems, in order to compute invariant measures. However, there are some difficulties in this higher dimensional generalisation attempt. First, we would need the appropriate notion of discriminant in $n$ variables and show that this is invariant under the extension. Additionally, the approach requires a projection of this extension onto a map on $\mathbb{R}^2$, and the most intuitive way of defining the extension map is as an action on the space of quadratic polynomials in $n$ variables, which in higher dimensions than 1 is much more than $n+1$.

\section{Further Investigation}
While this project has reach a reasonable conclusion, there are still a few avenues of research that are not fully complete. Some of these have already been mentioned during the paper, though they will be explicitly stated again here. 

Firstly, the adaptation of Keane's method to lift the solution generating system to a Lebesgue preserving bijection on the space of quadratic polynomials in one variable is highly dependent on the condition \eqref{markovMapCondition} -- if we do not have such a domain $R$, then we cannot ensure bijectivity. However, regarding the construction of the matrices that generate the integer solutions, we can see that a region on the ellipse $\mathcal{E}$ exhibits similar properties. Hence, one may suspect that if we project this domain onto the real line using the appropriate modified stereographic projection map, we return the appropriate region $R$. I suspect that this is not difficult to verify, though requires some explicit computation to do which has not been done as of yet. Most notably, it is necessary to confirm that the conjugate dynamical system determined by the stereographic projection conjugacy map is indeed piecewise M\"{o}bius. If this is true, then not every piecewise M\"{o}bius map is a projection of some solution generating system (regard that the doubling map cannot have a region $R$ satisfying \eqref{markovMapCondition}), so it is also pertinent to ask what class of piecewise M\"{o}bius maps are achievable by this projection method.

As mentioned previously, one may also ask whether the extension system restricted to the parameter space with determinant 1 is indeed the natural extension of the base dynamical system. We do not see any reason why this would not be the case (given that the corresponding step in \cite{Keane95} may be viewed as the natural extension of the Gauss map), though this will likely take some calculation to verify. 

Given that the Farey map can be represented as a projection of a solution generating system for a particular quadratic form, it is worth asking whether other number-theoretic maps/objects have similar representations. Can modified Gauss maps be represented in a similar way (including $\alpha$-continued fraction maps)? Additionally, can we define continued fraction maps in higher dimensions by passing to an appropriate quadratic form in one more variable and then performing the method outlined in the section above to determine a map on $[0,1)^2$? 

One problem with the method is that it is difficult to guarantee whether a given quadratic form has a solution generating system or not. It may be interesting to find some family of quadratic forms that all have solution generating systems, and calculate the corresponding family of maps, determining whether they are ergodic, and potentially investigating the number-theoretic properties of the numeration systems they induce.

\section{Acknowledgements}
The author would like to thank several people for their aid, help and insight with the above work.

Firstly, a thanks to Tom Kempton for his supervision of the project as part of the PhD, and encouragement to carry it to its completion. It has been a long time in the works, and it has been a pleasure being your student.

Next, a thanks is also extended to Giovanni Panti, who provided the very useful insight of the De Sitter space, which provides more concrete connections between this work and others, and gives a possible framework for aiding in the proof of the proposed conjecture.

Finally, the author would like to thank Karma Dajani and Cor Kraaikamp for their friendly advice in the final stages of the project, in particular with regards to publishing advice.

\bibliographystyle{plain}
\bibliography{refs}

\end{document}